\DeclareMathOperator{\SL}{SL}
\DeclareMathOperator{\Lip}{Lip}
\DeclareMathOperator{\SO}{SO}
\DeclareMathOperator{\diagmat}{diag}
\DeclareMathOperator{\trace}{trace}
\newcommand{\emptyword}{\varepsilon}
\newcommand{\R}{\ensuremath{\mathbb{R}}}
\newcommand{\Q}{\ensuremath{\mathbb{Q}}}
\newcommand{\N}{\ensuremath{\mathbb{N}}}
\newcommand{\K}{\ensuremath{\mathbb{K}}}
\newcommand{\Z}{\ensuremath{\mathbb{Z}}}
\newcommand{\cE}{\ensuremath{\mathcal{E}}}
\newcommand{\cM}{\ensuremath{\mathcal{M}}}
\newcommand{\cP}{\ensuremath{\mathcal{P}}}
\newcommand{\cS}{\ensuremath{\mathcal{S}}}
\newcommand{\short}{\widehat}
\renewcommand{\setminus}{{-}}
\newtheorem{thm}{Theorem}
\newtheorem{lem}{Lemma}
\newtheorem{lemma}[lem]{Lemma}
\newtheorem{cor}[lem]{Corollary}
\newtheorem{conj}{Conjecture}
\title{A polynomial isoperimetric inequality for $\SL(n,\Z)$}
\author{Robert Young}
\address{Institut des Hautes \'Etudes Scientifiques,\\
Le Bois Marie, 35 route de Chartres, F-91440 Bures-sur-Yvette, France}
\date{\today}
\email{rjyoung@ihes.fr}
\begin{document}
\bibliographystyle{amsplain}
\begin{abstract}
  We prove that when $n\ge 5$, the Dehn function of $\SL(n,\Z)$ is at
  most quartic.  The proof involves decomposing a disc in
  $\SL(n,\R)/\SO(n)$ into a quadratic number of loops in generalized
  Siegel sets.  By mapping these loops into $\SL(n,\Z)$ and replacing
  large elementary matrices by ``shortcuts,'' we obtain words of a
  particular form, and we use combinatorial techniques to fill these
  loops.
\end{abstract}
\maketitle

\section{Introduction}
The Dehn function is a geometric invariant of a space (typically, a
riemannian manifold or a simplicial complex) which measures the
difficulty of filling closed curves with discs.  This can be made into
a group invariant by defining the Dehn function of a group to be the
Dehn function of a space on which the group acts cocompactly.  The
choice of space affects the Dehn function, but its rate of growth
depends solely on the group.

The study of Dehn functions of lattices in semisimple Lie groups is a
natural direction.  For cocompact lattices, this is straightforward;
such a lattice acts on a non-positively curved symmetric space $X$,
and this non-positive curvature gives rise to a linear or quadratic
Dehn function.  Non-cocompact lattices have more complicated behavior.
The key difference is that if the lattice is not cocompact, it acts
cocompactly on a subset of $X$ rather than the whole thing, and the
boundary of this subset may contribute to the Dehn function.

In the case that $\Gamma$ has $\Q$-rank 1, the Dehn function is
almost completely understood, and depends primarily on the $\R$-rank
of $G$.  In this case, $\Gamma$ acts cocompactly
on a space consisting of $X$ with infinitely many
disjoint horoballs removed.  When $G$ has $\R$-rank
1, the boundaries of these horoballs correspond to nilpotent groups,
and the lattice is hyperbolic relative to these nilpotent groups.  The
Dehn function of the lattice is thus equal to that of the nilpotent
groups, and Gromov showed that unless $X$ is the complex,
quaternionic, or Cayley hyperbolic plane, the Dehn function is at most
quadratic \cite{GroAII}.  If $X$ is the complex or quaternionic
hyperbolic plane, the Dehn function is cubic \cite{GroAII,PittetCLB};
if $X$ is the Cayley hyperbolic plane, the precise growth rate is
unknown, but is at most cubic.

When $G$ has $\R$-rank 2 and $\Gamma$ has $\Q$-rank 1 or 2, Leuzinger
and Pittet \cite{LeuzPitRk2} proved that the Dehn function grows
exponentially.  As in the $\R$-rank 1 case, the proof relies
on understanding the subgroups corresponding to the removed horoballs,
but in this case the subgroups are solvable and have exponential Dehn
function.  Finally, when $G$ has $\R$-rank 3 or greater and $\Gamma$
has $\Q$-rank 1, Drutu \cite{DrutuFilling} has shown that the boundary
of a horoball satisfies a quadratic filling inequality and that
$\Gamma$ enjoys an ``asymptotically quadratic'' Dehn function, i.e.,
its Dehn function is bounded by $n^{2+\epsilon}$ for any $\epsilon>0$.

When $\Gamma$ has $\Q$-rank larger than $1$, the geometry of the
space becomes more complicated.  The main difference is that
the removed horoballs are no longer disjoint, so many of the previous
arguments fail.  In many cases, the best known result is due to
Gromov, who sketched a proof that the Dehn function of $\Gamma$ is
bounded above by an exponential function \cite[5.A$_7$]{GroAII}.  A
full proof of this fact was given by Leuzinger \cite{LeuzingerPolyRet}.  

In this paper, we consider $\SL(n,\Z)$.  This is a lattice with
$\Q$-rank $n-1$ in a group with $\R$-rank $n-1$, so when $n$ is small,
the methods above apply.  When $n=2$, the group $\SL(2,\Z)$ is
virtually free, and thus hyperbolic.  As a consequence, its Dehn
function is linear.  When $n=3$, the result of Leuzinger and Pittet
mentioned above implies that the Dehn function of $\SL(3,\Z)$ grows
exponentially; this was first proved by Epstein and Thurston
\cite{ECHLPT}.

Much less is known about the Dehn function for lattices in $\SL(n,\Z)$
when $n\ge 4$.  By the results of Gromov and Leuzinger above, the Dehn
function of any such lattice is bounded by an exponential function,
but the Dehn function may be polynomial in many cases.  Thurston
\cite{ECHLPT} conjectured that
\begin{conj}
  When $n\ge 4$, $\SL(n,\Z)$ satisfies the isoperimetric inequality
  $$\delta_{\SL(n,\Z)}(\ell)\lesssim \ell^2.$$
\end{conj}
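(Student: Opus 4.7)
The plan is to combine a geometric filling in the symmetric space with combinatorial fillings that exploit the higher-rank structure of $\SL(n,\Z)$. Let $w$ be a word of length $\ell$ in the standard generators representing the identity, and realize it as a loop $\gamma$ in $X = \SL(n,\R)/\SO(n)$. Since $X$ is $\mathrm{CAT}(0)$, the loop $\gamma$ bounds a disc $D$ in $X$ of area at most $C\ell^2$. If $\SL(n,\Z)$ acted cocompactly on $X$ this would immediately give the result; the difficulty is that $D$ may wander arbitrarily deep into the cusp regions (the removed horoballs), and pieces of $\gamma$ itself may be far from the thick part.

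The second step is to decompose $D$ into cells, each supported in a generalized Siegel set associated with some proper rational parabolic subgroup (or the central cocompact region). The total number of cells is proportional to the area, namely $O(\ell^2)$, and each cell has boundary of bounded intrinsic length in the Siegel set. Lifting these boundaries back to $\SL(n,\Z)$ produces $O(\ell^2)$ combinatorial loops, each of which must be filled by relations of \emph{constant} area if one is to obtain a quadratic bound overall.

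The third and decisive step is the combinatorial filling inside the parabolic subgroups, where the key input is the Steinberg commutator identity
\[
[e_{ij}(a),\,e_{jk}(b)] = e_{ik}(ab),\qquad i,j,k\text{ distinct}.
\]
For $n\ge 4$ an intermediate index is always available, so a long unipotent entry $e_{ik}(N)$ can be realized as an iterated commutator of matrices of entry size $O(\sqrt{N})$, giving a ``shortcut'' of logarithmic depth. Applied to each cell boundary in a Siegel set, the shortcut should reduce the loop to one of uniformly bounded combinatorial length, which is then filled by a bounded collection of template relations in $\SL(n,\Z)$.

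The main obstacle is making this bookkeeping tight enough to yield a genuinely quadratic bound rather than a higher-degree polynomial such as the quartic bound established in this paper. The parabolic subgroups are solvable and in general have exponential Dehn function, so \emph{all} of the shortcutting must occur before one ever fills inside a parabolic; equivalently, one needs an intrinsic quadratic isoperimetric inequality for the shortcut-modified Cayley graph of each parabolic, uniform across scales and across all rational parabolics. In the approach of the present paper, patching fillings between adjacent Siegel sets and tracking the length cost of a single shortcut each introduce polynomial overhead; removing this overhead is precisely where new ideas appear to be required, and the case $n=4$ is the tightest, since the supply of intermediate indices for Steinberg commutators is minimal.
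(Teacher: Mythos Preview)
The statement you are addressing is Thurston's \emph{conjecture}; the paper does not prove it, and your proposal is not a proof either. What you have written is essentially the skeleton of the paper's argument for the quartic bound, together with an honest acknowledgment that you do not know how to close the gap to quadratic.

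The concrete obstruction is in your third step. You assert that the shortcut construction ``should reduce the loop to one of uniformly bounded combinatorial length.'' It does not. A cell of the $\mathrm{CAT}(0)$ disc at distance $r$ from the basepoint can sit at depth $r$ in the cusp, where the unipotent coefficients of $\rho(x)^{-1}\rho(y)$ are of order $e^{cr}$ (Lemma~\ref{lem:paraBounds}). The shortcut $\short{e}_{ij}(x)$ has length $O(\log|x|)$, so the word attached to each edge has length $O(r)=O(\ell)$, not $O(1)$. The boundary of each triangle is therefore a word of length $O(\ell)$, and the best the paper can do is fill it with area $O(\ell^2)$, yielding $O(\ell^2)\cdot O(\ell^2)=O(\ell^4)$ in total. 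To get a quadratic bound you would need either constant-area fillings of these $O(\ell)$-length triangle boundaries, or a decomposition with far fewer cells deep in the cusp; neither is supplied.

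A second point: even the paper's quartic argument does not go through for $n=4$. The fillings of the shortened Steinberg relations (Lemmas~\ref{lem:xiConj}--\ref{lem:infPres}) rest on the quadratic Dehn function of the solvable groups $H_{S,T}$ (Theorem~\ref{thm:HDehn}), which requires $\#S\ge 3$ or $\#T\ge 3$ and hence $p\ge 5$. Your remark that ``the supply of intermediate indices for Steinberg commutators is minimal'' at $n=4$ understates the problem: the specific machinery used here simply does not apply there, and a separate idea is needed even for a polynomial bound.
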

In this paper, we will prove that
\begin{thm}\label{thm:mainthm} When $n\ge 5$, $\SL(n,\Z)$ satisfies the isoperimetric
  inequality
  $$\delta_{\SL(n,\Z)}(\ell)\lesssim \ell^4.$$
\end{thm}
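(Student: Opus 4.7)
The plan is to begin in the symmetric space $X = \SL(n,\R)/\SO(n)$, which is CAT(0) and therefore has a quadratic filling function. Given a word $w$ of length $\ell$ representing the identity in $\SL(n,\Z)$, I would realize it as a based loop $\gamma$ in $X$ of length $O(\ell)$ using a quasi-isometric orbit map, and then fill $\gamma$ by a Lipschitz disc $D$ of area $O(\ell^2)$. The difficulty is that $D$ almost certainly strays deep into the thin part of the locally symmetric space, so $D$ cannot be pushed directly back to a filling in the Cayley graph of $\SL(n,\Z)$ without incurring enormous area losses.

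To deal with this, I would follow the abstract's recipe and cut $D$ into $O(\ell^2)$ pieces of bounded diameter, each contained in a \emph{generalized Siegel set}, i.e.\ a neighbourhood in $X$ associated to some parabolic $\Q$-subgroup $P$. On each such Siegel set, the action of $P(\Z)$ is cocompact, so each small piece of $\partial D$ gives a short loop $\gamma_i$ that can be expressed, via a bounded-diameter map back to $\SL(n,\Z)$, as a word in a set of generators adapted to $P$. These generators should include the elementary matrices $e_{ij}(N)$ for $N \in \Z$, because such a matrix, although a single group element, corresponds in the Siegel set to motion at a scale that depends on how deep one is in the horospherical direction; declaring $e_{ij}(N)$ to be a \emph{shortcut} generator of length comparable to $\log|N|$ is consistent with the commutator identity $[e_{ik}(a), e_{kj}(b)] = e_{ij}(ab)$, which lets one assemble $e_{ij}(N)$ recursively from $O(\log N)$ commutators when enough indices $k$ are available.

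With this rewriting, each loop $\gamma_i$ becomes a short word in an extended alphabet whose syllables are of a very particular ``parabolic'' form: torus elements, shortcut elementary matrices in the unipotent radical of $P$, and a bounded Levi contribution. The final task is to give a combinatorial filling of words of this form in $\SL(n,\Z)$ of area $O(\ell^2)$, using the Steinberg relations to cancel shortcuts, commute elementary matrices past one another, and absorb torus conjugations. Adding $O(\ell^2)$ pieces, each filled with area $O(\ell^2)$, yields the desired bound $\ell^4$. The main obstacle, and the step that consumes most of the work, is the combinatorial filling of these parabolic-form loops: one must track the scales $N$ appearing in the shortcuts, control how conjugation by torus elements distorts them, and use the existence of sufficiently many auxiliary indices (which is precisely where the hypothesis $n \ge 5$ enters, giving enough room to manoeuvre shortcuts in and out of commutators without colliding with the indices of the loop itself) to bound the area spent on each such manipulation by a quadratic function of the loop's length.
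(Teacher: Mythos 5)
Your outline reproduces the paper's overall scheme (quadratic filling in $\SL(n,\R)/\SO(n)$, a triangulation into $O(\ell^2)$ bounded-diameter cells, words adapted to parabolic subgroups with logarithmic-length shortcuts $\short{e}_{ij}(N)$, and a quadratic combinatorial filling of each cell), but the two steps that carry all the difficulty are asserted rather than argued, and for one of them the mechanism you suggest would not obviously work. First, the claim that ``on each Siegel set the action of $P(\Z)$ is cocompact'' is not correct, and it is not what is needed; the actual statement required is that for adjacent sample points $x,y$ on the disc, the element $\rho(x)^{-1}\rho(y)$ lies in $P(\Z)$ with \emph{bounded} block-diagonal (Levi) part and unipotent part whose entries are at most exponential in $\ell$. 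Establishing this takes a genuine argument (in the paper: identifying the flag $V(x,r)$ of short lattice vectors to force $\rho(x)^{-1}\rho(y)\in P(\Z)$, then projecting to the product of lower-rank symmetric spaces $M_P/K_P$ to bound the reductive part). Without it you cannot even write the triangle boundaries in the ``parabolic form'' your last paragraph relies on.

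Second, and more seriously, you propose to build $\short{e}_{ij}(N)$ recursively from $O(\log N)$ commutators and then ``use the Steinberg relations to cancel shortcuts, commute elementary matrices past one another, and absorb torus conjugations'' at quadratic cost. That quadratic cost is precisely the theorem one has to prove: each single application of a Steinberg relation at the level of shortcut words is itself a filling problem for a word of length $O(\log N)$, and it is not at all clear that the recursively-defined commutator shortcuts admit such fillings of area $O((\log N)^2)$ --- a priori one only gets the exponential bound coming from the ambient group. The paper resolves this by constructing the shortcuts differently, inside solvable subgroups $H_{S,T}=(\R^{\#S-1}\times\R^{\#T-1})\ltimes(\R^S\otimes\R^T)$ that lie in the thick part of $\SL(n,\R)$ and, by a theorem of de Cornulier and Tessera, have \emph{quadratic} Dehn function when $\#S\ge 3$ or $\#T\ge 3$; quadratic-area versions of addition, commutation, Weyl-element and diagonal conjugation of shortcuts (and the independence of the shortcut from the choice of $S,T$) are then deduced by moving between different $H_{S,T}$'s. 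This is also where $n\ge 5$ enters concretely: one needs disjoint index sets with $\#S\ge 3$ and $\#T\ge 2$ avoiding the indices in play, which your ``enough room to manoeuvre'' gestures at but does not pin down. As it stands, your proposal assumes the quadratic-cost shortcut calculus as a black box, so the central estimate $O(\ell^2)$ per triangle --- and hence the $\ell^4$ bound --- is not yet justified.
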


In Section~\ref{sec:prelims}, we present some preliminaries, and in
Section~\ref{sec:overview}, we sketch an overview of the proof.  In
Sections~\ref{sec:fundset}--\ref{sec:filling}, we prove
Theorem~\ref{thm:mainthm}.

Some of the ideas in this work were inspired by discussions at the
American Institute of Mathematics workshop, ``The Isoperimetric
Inequality for $\SL(n,\Z)$,'' and the author would like to thank the
organizers, Nathan Broaddus, Tim Riley, and Kevin Wortman; and
participants, especially Mladen Bestvina, Alex Eskin, Martin Kassabov,
and Christophe Pittet.  The author would also like to thank Tim Riley
and Yves de Cornulier for many helpful conversations while the author
was visiting Bristol University and Universit\'e de Rennes.

\section{Preliminaries} \label{sec:prelims}

In this section, we recall several facts about $\SL(p,\Z)$,
$\SL(p,\R)$, and about Dehn functions.

We provide only a minimal introduction to Dehn functions here; for a
survey with examples, see for instance \cite{Bridson}.  The Dehn
function is a group invariant which gives one way to describe the
difficulty of determining whether a word in a group represents the
identity.  It can be described both combinatorially and geometrically,
and the interaction between these two viewpoints is often crucial.  We
first give some terminology.  If $X$ is a set, and $x_i\in X$ for
$1\le i\le n$, we call the formal product $x_1\dots x_n$ a word in
$X$.  Let $X^*$ to be the set of words in $X\cup X^{-1}$, where
$X^{-1}$ is the set of formal inverses of elements of $X$.  We denote
the empty word by $\emptyword$.  If $w\in X^*$, we can write
$w=x_1x_2\dots x_n$, and we define the length $\ell(w)$ of $w$ to be
$n$.  Note especially that these words are not reduced; that is, $x$
may appear next to $x^{-1}$.  If $X\subset H$ for some group $H$,
there is a natural evaluation map $X^*\to H$, and we say that words
represent elements of $H$.

Using these concepts, we can describe the combinatorial Dehn function.
If
$$H=\langle h_1,\dots,h_d \mid r_1,\dots,r_s\rangle$$ 
is a finitely presented group, we can let $\Sigma=\{h_1,\dots,h_d\}$
and consider words in $\Sigma^*$.  If a word $w$ represents the
identity, then there is a way to prove this using the relations.  That
is, there is a sequence of steps which reduces $w$ to the empty word,
where each step is a free expansion (insertion of a subword $x_i^{\pm
  1}x_i^{\mp1}$), free reduction (deletion of a subword $x_i^{\pm
  1}x_i^{\mp1}$), or the application of a relator (insertion or
deletion of one of the $r_i$).  We call the number of applications of
relators in a sequence its {\em cost}, and we call the minimum cost of
a sequence which starts at $w$ and ending at $\varepsilon$ the {\em
  filling area} of $w$, denoted by $\delta_H(w)$.  We then define the
{\em Dehn function} of $H$ to be
$$\delta_H(n)=\max_{\ell(w)\le n} \delta_H(w),$$
where the maximum is taken over words representing the identity.  This
depends {\em a priori} on the chosen presentation of $H$; we will see
that the growth rate of $\delta_H$ is independent of this choice.  For
convenience, if $v,w$ are two words representing the same element of
$H$, we define $\delta_H(v,w)=\delta_H(vw^{-1})$; this denotes the
minimum cost to transform $v$ to $w$.

This can also be interpreted geometrically.  If $K_H$ is the {\em
  presentation complex} of $H$ (a simply-connected 2-complex whose
1-skeleton is the Cayley graph of $H$ and whose $2$-cells correspond
to translates of the relators), then $w$ corresponds to a closed curve
in the $1$-skeleton of $K_H$.  Similarly, the sequence of steps
reducing $w$ to the identity corresponds to a homotopy contracting
this closed curve to a point.  More generally, if $X$ is a riemannian
manifold or simplicial complex, we can define the filling area
$\delta_X(\gamma)$ of a Lipschitz curve $\gamma:S^1\to X$ to be the
infimal area of a Lipschitz map $D^2\to X$ which extends $\gamma$.
Then we can define the Dehn function of $X$ to be
$$\delta_X(n)=\sup_{\ell(\gamma)\le n} \delta_X(\gamma),$$
where the supremum is taken over null-homotopic closed curves.
As in the combinatorial case, if $\beta$ and $\gamma$ are two
curves connecting the same points which are homotopic with their
endpoints fixed, we define $\delta_X(\beta,\gamma)$ to be the infimal
area of a homotopy between $\beta$ and $\gamma$ which fixes their
endpoints.

Gromov stated a theorem connecting these two definitions, proofs of
which can be found in \cite{Bridson} and \cite{BurTab}:
\begin{thm}[Gromov's Filling Theorem]\label{thm:GroFill}
  If $X$ is a simply connected riemannian manifold or simplicial
  complex and $H$ is a finitely presented group acting properly
  discontinuously, cocompactly, and by isometries on $M$, then
  $\delta_H\sim \delta_M$.
\end{thm}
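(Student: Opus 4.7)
The plan is to establish $\delta_H \sim \delta_X$ by proving both inequalities, using the \v{S}varc--Milnor lemma as the bridge between the combinatorial and geometric pictures. The core insight is that a cocompact, properly discontinuous, isometric action makes the Cayley $2$-complex of $H$ quasi-isometric to $X$ in a way that sends local combinatorial moves to local geometric fillings of bounded area, and vice versa.

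I would begin by fixing a basepoint $x_0\in X$ and invoking \v{S}varc--Milnor: the orbit map $h \mapsto h x_0$ is a quasi-isometry, and cocompactness provides a finite generating set $\Sigma = \{h : d(x_0, hx_0) \le D\}$ for $D$ larger than twice the diameter of a fundamental domain. Using simple connectivity of $X$, a standard argument shows that $H$ is finitely presented by some set $\mathcal{R}$ of relators of length at most $L$, and that the presentation $2$-complex $K_H$ can be realized equivariantly inside $X$ by sending each vertex to an orbit point, each edge to a geodesic segment of length $\le D$, and each $2$-cell to a Lipschitz disc of uniformly bounded area $A_0$.

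For $\delta_X \lesssim \delta_H$: given a null-homotopic Lipschitz loop $\gamma \colon S^1 \to X$ of length $\ell$, sample $\gamma$ at $O(\ell)$ equally-spaced points and replace each sample by its closest orbit vertex; the resulting combinatorial loop gives a word $w \in \Sigma^*$ of length $O(\ell)$ representing the identity. A combinatorial null-sequence of cost at most $\delta_H(O(\ell))$ exists by hypothesis. Each relator applied corresponds to a loop in $X$ of bounded diameter, filled by a disc of area $\le A_0$; concatenating these local discs with a ``ladder'' homotopy between $\gamma$ and its polygonal approximation (of total area $O(\ell)$) produces a Lipschitz filling of $\gamma$ of area $\lesssim \delta_H(\ell)$.

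For the converse $\delta_H \lesssim \delta_X$: given a word $w$ of length $\ell$ representing the identity, form the polygonal loop $\gamma_w$ by concatenating the geodesic segments between successive orbit vertices; this loop has length $O(\ell)$ in $X$. By hypothesis there is a Lipschitz disc $f \colon D^2 \to X$ filling $\gamma_w$ of area at most $\delta_X(O(\ell))$. I would then subdivide $D^2$ into $O(\mathrm{Area}(f))$ triangles whose images under $f$ have diameter less than some fixed $\varepsilon < L/2$. Assigning to each triangle vertex a nearest orbit point, the boundary word of each triangle is short enough to lie in the finite set $\mathcal{R}$ (possibly after applying a bounded number of relators), so each triangle contributes $O(1)$ to the cost and the total cost is $O(\delta_X(\ell))$.

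The main obstacle is the triangulation step in the last paragraph: extracting from an arbitrary Lipschitz disc of bounded area a combinatorial subdivision whose cells have uniformly small image-diameter, while keeping the cell count proportional to the area. In the simplicial case one can pull back the cell structure directly after a simplicial approximation; in the Riemannian case one needs a careful subdivision using the Lipschitz constant of $f$ together with a covering of $X$ by small balls at scale $\varepsilon$. Once this quantitative conversion between area and cell-count is in place, the remaining steps — reading off relators from short boundary words and bookkeeping the adjustment moves — are routine.
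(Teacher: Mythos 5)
The paper does not prove this theorem itself; it cites \cite{Bridson} and \cite{BurTab}, and isolates the one genuinely hard ingredient as Lemma~\ref{lem:approx}, which it attributes to the Federer--Fleming Deformation Lemma \cite{FedFlem} or the Cellulation Lemma \cite[5.2.3]{Bridson}. Your outline follows the standard route (equivariant realization of $K_H$ in $X$, then two comparisons), and the direction $\delta_X\lesssim\delta_H$ is fine as sketched. The gap is exactly at the step you flag, and your proposed repair does not close it. You cannot subdivide $D^2$ into $O(\mathrm{Area}(f))$ triangles whose $f$-images have diameter $<\varepsilon$: the number of such triangles is governed by the Lipschitz constant (roughly $(\Lip(f)\cdot\mathrm{diam}(D^2)/\varepsilon)^2$), and a filling disc of near-minimal area can have arbitrarily large Lipschitz constant and image diameter --- think of a disc whose image is a long, nearly one-dimensional tendril of tiny area. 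Subdividing ``using the Lipschitz constant of $f$ together with a covering of $X$ by small balls'' therefore yields a combinatorial filling of cost $O(\Lip(f)^2)$, not $O(\delta_X(\ell))$, and no reparameterization of $f$ lets you bound $\Lip(f)$ by the area alone. The correct tool is a deformation, not a subdivision: one pushes $f$, skeleton by skeleton, onto the $2$-skeleton of a cocompact complex (equivalently into the image of $K_H$), with the Federer--Fleming estimate guaranteeing that the pushed map crosses only $O(\mathrm{Area}(f)+\ell+1)$ two-cells, each contributing boundedly many relator applications. That quantitative ``area controls cell count after deformation'' statement is the substance of the theorem in this direction; in the paper it is precisely Lemma~\ref{lem:approx}(2), so omitting it (or replacing it by a mesh-size subdivision) leaves the proof incomplete. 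In the simplicial setting your appeal to simplicial approximation has the same defect unless the approximation is done via such a pushing argument with an area-controlled count of simplices.
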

Here, $\sim$ is an equivalence relation which requires that $\delta_H$
and $\delta_M$ have the same growth rate according to the following
definition: if $f,g:\N\to \N$, let $f\lesssim g$ if and only if there
is a $c$ such that
$$f(n)\le c g(cn+c)+c\text{ for all }n$$
and $f\sim g$ if and only if $f\lesssim g$ and $g\lesssim f$.  One
consequence of Theorem~\ref{thm:GroFill} is that the Dehn functions
corresponding to different presentations are equivalent under this
relation.  

We state the following lemma, which is used in the proof of
Theorem~\ref{thm:GroFill}.  The lemma follows from the Federer-Fleming
Deformation Lemma \cite{FedFlem} or from the Cellulation Lemma
\cite[5.2.3]{Bridson}:
\begin{lemma} \label{lem:approx} Let $H$ and $X$ be as in the Filling
  Theorem, and let $f:K_H\to X$ be an $H$-equivariant map of a
  presentation complex for $H$ to $X$.  There is a $c$ such that:
  \begin{enumerate}
  \item Let $s:[0,1]\to X$ connect $f(e)$ and $f(h)$, where $e$ is the
    identity in $H$ and $h\in H$.  There is a word $w$ which
    represents $h$ and which has length $\ell(w)\le c\ell(s)+c$.  If
    $X$ is simply connected, then $w$ approximates $s$ in the sense
    that if $\gamma_w:[0,1]\to K_H$ is the curve corresponding to $w$,
    then
    $$\delta_X(s,\gamma_w)\le c \ell(s)+c.$$
  \item If $w$ is a word representing the identity in $H$ and
    $\gamma:S_1\to K_H$ is the corresponding closed curve in $K_H$,
    then
    $$\delta_H(w)\le c(\ell(w)+\delta_X(f\circ \gamma)).$$
  \end{enumerate}
\end{lemma}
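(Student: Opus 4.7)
The plan is to exploit the cocompact action of $H$ on $X$. Fix a compact fundamental domain $D\subset X$ of diameter $R$, and set $B = \{h\in H : d_X(f(e), f(h)) \le 2R+2\}$; properness of the action makes $B$ finite, so after enlarging the generating set I may assume $B$ lies in the generators. Let $\pi\colon X \to H$ be a section with $x\in \pi(x)\cdot D$ for all $x$.

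For part (1), I would subdivide $s\colon[0,1]\to X$ into $n\le \ell(s)+1$ pieces of length at most $1$, with samples $p_i=s(t_i)$, and set $h_i=\pi(p_i)$, arranging $h_0=e$ and $h_n=h$ at the cost of prepending or appending a single generator. Consecutive samples are close, so $d_X(f(h_{i-1}), f(h_i)) \le 2R+1$, forcing $h_{i-1}^{-1}h_i\in B$ to be a single generator $x_i$. The word $w=x_1\dots x_n$ then represents $h$ and has length $\le c\ell(s)+c$. When $X$ is simply connected, I would compare $s$ to $f\circ \gamma_w$ segment by segment: each pair of corresponding sub-arcs closes up to a loop of bounded diameter, and since $H$ acts cocompactly there are only finitely many such loops up to $H$-translation, each with a uniformly bounded filling area $A_0$. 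Summing over the $n$ segments yields $\delta_X(s,\gamma_w)\le c\ell(s)+c$.

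For part (2), I would pick a Lipschitz filling $\Sigma\colon D^2\to X$ of $f\circ \gamma$ whose area is within $\epsilon$ of $\delta_X(f\circ \gamma)$. By the Federer-Fleming deformation, I can triangulate $D^2$ so that the triangulation refines the edge-decomposition of $\partial D^2$ induced by $\gamma$, each triangle maps under $\Sigma$ into a set of diameter $<1$, and the total number of triangles is at most $c(\delta_X(f\circ \gamma)+\ell(w))$. Assigning to each interior vertex $v$ the element $\pi(\Sigma(v))\in H$ and to each interior edge the corresponding single generator in $B$, each triangle contributes a length-$3$ word that represents the identity. Since there are only finitely many such triangle-words up to $H$-translation, each fills with a number of relators bounded by a single constant depending only on the presentation. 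Adding in the bounded-cost bigons that absorb the difference between $f\circ\gamma$ and the word-labels along $\partial D^2$ (using part (1) on each edge of $\gamma$), the total cost is $\le c(\ell(w)+\delta_X(f\circ \gamma))$.

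The serious obstacle is the controlled triangulation in part (2): one must simultaneously achieve the small-diameter condition on each triangle, respect the prescribed combinatorial decomposition of $\partial D^2$, and keep the triangle count linear in $\delta_X(f\circ \gamma)+\ell(w)$. This is precisely the content of the Federer-Fleming deformation, which is imported as a black box; once granted, the translation from a geometric filling in $X$ to a combinatorial filling in $K_H$ reduces to bookkeeping with the finite sets $B$ and the finitely many boundary-identity triwords.
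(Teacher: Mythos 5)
Your proposal is correct and matches the paper's treatment: the paper gives no independent argument for this lemma, deferring exactly to the Federer--Fleming Deformation Lemma (equivalently Bridson's Cellulation Lemma), which is the same black box your sketch rests on, and the surrounding bookkeeping with a fundamental domain, the finite set $B$, and the finitely many bounded triangle-words is the standard proof found in the cited sources. The only cosmetic adjustment needed is that, since $K_H$ and its generating set are fixed in the statement, you should not enlarge the generating set but instead replace each element of $B$ by a word of uniformly bounded length in the given generators, which changes nothing else in the argument.
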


We now set out notation for $\SL(p)$ and several of its subgroups.  In
the following, $\K$ represents either $\Z$ or $\R$; when it is
omitted, we take it to be $\R$.  Let $G=\SL(p,\R)$ and let
$\Gamma=\SL(p,\Z)$.  Let $z_1,\dots,z_p$ generate $\Z^p$, and if
$S\subset \{1,\dots,p\}$, let $\K^S=\langle z_s\rangle_{s\in S}$ be a
subspace of $\K^p$.  If $q\le p$, there are many ways to include
$\SL(q)$ in $\SL(p)$.  Let $\SL(S)$ be the copy of $\SL(\#S)$ in
$\SL(p)$ which acts on $\R^S$ and fixes $z_t$ for $t\not \in S$.  If
$S_1,\dots,S_n$ are disjoint subsets of $\{1,\dots, p\}$ such that
$\bigcup S_i=\{1,\dots,p\}$, let
$$U(S_1,\dots,S_n;\K)\subset \SL(p,\K)$$
be the subgroup of matrices preserving the flag
$$\R^{S_i}\subset \R^{S_i\cup S_{i-1}} \subset \dots\subset \R^p $$
when acting on the right.  If the $S_i$ are sets of consecutive
integers in increasing order, $U(S_1,\dots,S_n;\K)$ is block upper
triangular.  For example, $U(\{1\},\{2,3,4\};\K)$ is the subgroup of
$\SL(4,\K)$ consisting of matrices of the form:
$$\begin{pmatrix}
  * & * & * & * \\
  0 & * & * & * \\
  0 & * & * & * \\
  0 & * & * & *
\end{pmatrix}.$$ If $d_1,\dots d_n>0$, let $U(d_1,\dots,d_n;\K)$ be
the group of upper block triangular matrices with blocks of the given
lengths, so that the subgroup illustrated above is $U(1,3;\K)$.  Each
group $U(d_1,\dots,d_n;\Z)$ is a parabolic subgroup of $\Gamma$,
and any parabolic subgroup of $\Gamma$ is conjugate to a unique such
group.  Let $\cP$ be the set of these groups.

We will note some facts about the combinatorial group theory of
$\Gamma$ and its subgroups.  Let $I$ be the identity matrix.  If $1\le
i\ne j\le p$, let $e_{ij}(x)\in \SL(p,\Z)$ be the elementary matrix
which consists of the identity matrix with the $(i,j)$-entry replaced
by $x$.  Let $e_{ij}:=e_{ij}(1)$.  There is a finite presentation which has the matrices
$e_{ij}$ as generators \cite{Milnor}:
\begin{align}
  \notag  \SL(p,\Z)=\langle e_{ij} \mid \; &[e_{ij},e_{kl}]=I & \text{if $i\ne l$ and $j\ne k$}\\
  & [e_{ij},e_{jk}]=e_{ik} & \text{if $i\ne k$}\label{eq:steinberg}\\
  \notag  & (e_{ij} e_{ji}^{-1} e_{ij})^4=I \rangle,
\end{align}
where we adopt the convention that $[x,y]=xyx^{-1}y^{-1}$.  

We will use a slightly expanded set of generators.  Let
$$\Sigma=\Sigma(p)=\{e_{ij}\mid 1\le i\ne j\le p\}\cup D,$$
where $D$ is the set of diagonal matrices in $\SL(p,\Z)$.  Then there
is a finite presentation of $\SL(p,\Z)$ with generating set $\Sigma$
and relations consisting of those in \eqref{eq:steinberg} and
relations expressing each element of $D$ as a product of elementary
matrices.  The advantage of this generating set is that if
$H=\SL(S,\Z)$ or $H=U(S_1,\dots,S_n;\Z)$, then $H$ is generated by
$\Sigma \cap H$.

The group $\Gamma$ is a lattice in $G=\SL(p,\R)$, and the geometry of
$G$ and of the quotient will be important in our proof.  We think of
$G$ and $\Gamma$ as acting on the symmetric space on the left.  Let
$\cE=\SL(p,\R)/\SO(p,\R)$.  The tangent space of $\cE$ at the
identity, $T_{I}\cE$ is isomorphic to the space of symmetric matrices
with trace 0.  If $u^{tr}$ represents the transpose of $u$, then we
can define an inner product $\langle u,v\rangle=\trace(u^{tr}v)$ on
$T_I \cE$.  Since this is $\SO(p)$-invariant, it gives rise to a
$G$-invariant riemannian metric on $\cE$.  Under this metric, $\cE$ is
a non-positively curved symmetric space.  The lattice $\Gamma$ acts on
$\cE$ with finite covolume, but the action is not cocompact.  Let
$\cM:=\Gamma\backslash \cE$.  If $x\in G$, we write the equivalence
class of $x$ in $\cE$ as $[x]_\cE$; similarly, if $x\in G$ or $x\in
\cE$, we write the equivalence class of $x$ in $\cM$ as $[x]_\cM$.

If $g\in G$ is a matrix with coefficients $\{g_{ij}\}$, we define
$$\|g\|_2=\sqrt{\sum_{i,j}g_{ij}^2},$$
$$\|g\|_\infty=\max_{i,j}|g_{ij}|.$$
Note that for all $g,h\in G$, we have 
$$\|gh\|_2\le \|g\|_2\|h\|_2$$
$$\|g^{-1}\|_2\ge \|g\|^{1/p}_2$$
and that there is a $c$ such that
$$c^{-1} d_G(I,g)-c\le  \log \|g\|_2 \le c d_G(I,g)+c.$$

It will be useful to have a geometric picture of elements of $\cE$ and
$\cM$.  The rows of a matrix in $\SL(p,\R)$ give a unit volume basis
of $\R^p$, and we can think of $G$ as the set of such bases.  From
this viewpoint, $\SO(p)$ acts on a basis by rotating the basis
vectors, so $\cE$ consists of the set of bases up to rotation.  An
element of $\Gamma$ acts by replacing the basis elements by integer
combinations of basis elements.  This preserves the lattice that they
generate, so we can think of $\Gamma\backslash G$ as the set of
unit-covolume lattices in $\R^p$.  The quotient $\cM$ is then the set
of unit-covolume lattices up to rotation.  Nearby points in $\cM$ or
$\cE$ correspond to bases or lattices which can be taken into each
other by small linear deformations of $\R^p$.

Finally, we define a subset of $\cE$ on which $\Gamma$ acts
cocompactly.  Let $\cE(\epsilon)$ be the set of points which
correspond to lattices with injectivity radius at least $\epsilon$.
When $\epsilon \le 1/2$, this set is contractible and $\Gamma$ acts on
it cocompactly \cite{ECHLPT}; we call it the {\em thick part} of
$\cE$, and its preimage $G(\epsilon)$ in $G$ the thick part of $G$.  Let
$\iota:K_\Gamma\to \cE$ be a $\Gamma$-equivariant map; if $\epsilon$
is sufficiently small, then the image of $\iota$ is contained in
$\cE(\epsilon)$.

\section{Overview of proof} \label{sec:overview} To understand our
methods for proving a polynomial Dehn function for $\Gamma$, it is
helpful to consider a related method for proving an exponential Dehn
function.  Let $w\in \Sigma^*$ be a word which represents the identity
in $\Gamma$, so that $w$ corresponds to a closed curve in $K_\Gamma$.
By abuse of notation, we also call this curve $w$.  We can construct a
curve $\alpha:S^1\to \cE$ which corresponds to $w$ by letting
$\alpha=[\iota(w)]_\cE$.  Let $\ell=\ell(\alpha)$ and assume
that $\alpha$ is parameterized by length.

Since $\cE$ is non-positively curved, we can use geodesics to fill
$\alpha$.  If $x,y\in \cE$, let $\lambda_{x,y}:[0,1]\to \cE$ be a
geodesic parameterized so that $\lambda_{x,y}(0)=x$,
$\lambda_{x,y}(1)=y$, and $\lambda_{x,y}$ has constant speed.  We can
define a homotopy $h:[0,\ell] \times [0,1]\to \cE$ by
$$h(x,t)=\lambda_{\alpha(x),\alpha(0)}(t/\ell).$$
Let $D^2\subset \R^2$ be the disc of radius $\ell$ centered at the
origin and let
$$f(r,\theta)=h(\ell \frac{\theta}{2\pi},r/\ell)$$
where $r$ and $\theta$ are polar coordinates.  Since $\cE$ is
non-positively curved, this map is Lipschitz and its Lipschitz
constant $\Lip(f)$ is bounded independently of $\alpha$; in
particular, it has area $O(\ell^2)$.  Furthermore, the image of $f$ is
contained in a ball around $[I]_\cE$ of radius $\ell$.

Since $\Gamma$ does not act cocompactly on $\cE$, this filling does
not directly correspond to an efficient filling of $w$ in $K_\Gamma$.
To construct a filling in $K_\Gamma$, we will need a map $\rho:\cE\to
\Gamma$.  We can construct one from a fundamental set for the action
of $\Gamma$ on $\cE$; we let $\cS$ be a Siegel set (see Sec.\
\ref{sec:fundset}) and define $\rho$ so that for all $x\in \cE$, $x\in
\rho(x)\cS$.  Since $\cM$ is not compact, this map is not a
quasi-isometry; if $x\in \cE$ is deep in the cusp of $\cM$, then small
changes in $x$ can result in large changes in $\rho(x)$.  On the other
hand, the injectivity radius in the cusp shrinks at most exponentially
with the distance from a basepoint.  That is, there is a $c$ such that
if $x\in B_r(I)\subset \cE$, and $d_{\cE}(x,y)<\exp(-c r),$ then
$d_{\Gamma}(\rho(x),\rho(y))\le c$.

Our basic technique is to construct a triangulation $\tau$ of the
disc, and use $f$ as a template for a map $\bar{f}:\tau\to K_\Gamma$.
We will construct $\bar{f}:\tau\to K_\Gamma$ one dimension at a time.
Let $\tau$ be a triangulation of $D^2$ with
$O(e^{2c\ell})$ cells such that the image of each cell under $f$ has
diameter at most $e^{-c\ell}$.  If $x$ and $y$ are vertices of an edge
of $\tau$, then
$$d_{\Gamma}(\rho(f(x)),\rho(f(y)))\le c,$$
where $d_{\Gamma}$ is the word metric on $\Gamma$ given by the
generating set $\Sigma$.  Let $\bar{f}_0:\tau^{(0)}\to
K_\Gamma$ be given by $\bar{f}_0(x)=\rho(f(x))$ for all $x$ (where we
identify elements of $\Gamma$ with the corresponding vertices in
$K_\Gamma$).

To construct $\bar{f}_1:\tau^{(1)}\to K_\Gamma$, we must find words in
$\Sigma^*$ which connect the images of adjacent vertices of $\tau$;
that is, for each edge $e=(x,y)$, we must find a word in $\Sigma^*$
representing $\bar{f}_0(x)^{-1}\bar{f}_0(y)$.  Since $\bar{f}_0(x)^{-1}\bar{f}_0(y)$ is a bounded
element of $\Gamma$, we choose $\bar{f}_1(e)$ to be a
word of length at most $c$.

Finally, we construct $\bar{f}$ on the triangles of $\tau$.  If
$\Delta$ is a triangle of $\tau$, then $\bar{f}_1(\partial\Delta)$
corresponds to a word of length at most $3c$ which represents the
identity.  Since $K_\Gamma$ is simply connected, each such word can be
filled by a disc of area at most $\delta_{\Gamma}(3c)$.  This
results in a map $\bar{f}$ of area $O(e^{2c\ell})$.  The
boundary of $\bar{f}$ is not quite $w$, but it remains a bounded
distance from $w$, and there is a homotopy between the two of area
$O(\ell)$.  Thus $\delta_\Gamma(w)=O(e^{2c\ell})$, and
$$\delta_{\Gamma}(\ell)\lesssim e^{\ell},$$ as desired.

We will prove a polynomial bound with a similar scheme.  The main
difference is that we construct $\tau$ by dividing $D^2$ into
$O(\ell^2)$ triangles of diameter $\le 1$ instead of exponentially
many triangles of exponentially small diameter.  We define $\rho$ and
$\bar{f}_0$ as described above, but it is no longer the case that if
$x$ and $y$ are connected by an edge, then
$d_{\Gamma}(\bar{f}_0(x),\bar{f}_0(y))<c$.  In Section~\ref{sec:parabounds}, we use the geometry of
$\cM$ to show instead that $\bar{f}_0(x)^{-1}\bar{f}_0(y)$ is the
product of a block-diagonal element of $\Gamma$ with bounded
coefficients and a unipotent element with at most exponentially large
coefficients. 

Because $\bar{f}_0(x)^{-1}\bar{f}_0(y)$ is no longer a bounded element
of $\Gamma$, we must change the way we define $\bar{f}_1$ as well.  In
Section~\ref{sec:normalform}, we will define a normal form for block
upper-triangular matrices with bounded block-diagonal part and
exponentially large unipotent part.  We will replace edges of
$\tau$ with words in this normal form which have length $O(\ell)$.

Finally, we construct $\bar{f}$ by extending $\bar{f}_1$ to the
2-cells of $\tau$.  The boundary of each 2-cell is a product of three
words in normal form and has length $O(\ell)$; in
Section~\ref{sec:filling}, we will show that such words can be filled
with discs of area $O(\ell^2)$.  Since there are $O(\ell^2)$ such
triangles to fill, this method will give an $\ell^4$ upper bound on
the Dehn function.

\section{Constructing a fundamental set}\label{sec:fundset}
In this section, we will define $\cS$, a fundamental set for $\Gamma$.
Let $\diagmat(t_1,\dots, t_p)$ be the diagonal matrix with entries $(t_1,\dots, t_p)$.
Let $A$ be the set of diagonal matrices in $G$ and let
$$A^+_{\epsilon}=\{\diagmat(t_1,\dots, t_p)\mid \prod t_i=1, t_i > 0, t_i\ge \epsilon t_{i+1}\}.$$
Let $N$ be the set of upper triangular matrices with 1's on the
diagonal and let $N^+$ be the subset of $N$ with off-diagonal entries
in the interval $[-1/2,1/2]$.  Translates of the set $N^+A^+_\epsilon$
are known as Siegel sets.  The following properties of Siegel sets are
well known (see for instance \cite{BorHar-Cha}).
\begin{lemma}\label{lem:redThe}\ \\
  There is an $1>\epsilon_{\cS}>0$ such that if we let 
  $$\cS:=[N^+A^+_{\epsilon_{\cS}}]_\cE\subset \cE,$$
  then
  \begin{itemize}
  \item $\Gamma\cS=\cE$. \label{lem:redThe:cover}
  \item There are only finitely many elements $\gamma\in \Gamma$
    such that $\gamma \cS \cap \cS \ne \emptyset$.  \label{lem:redThe:fundSet}
  \end{itemize}
\end{lemma}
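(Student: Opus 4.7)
The plan is to exploit the Iwasawa decomposition $G = NAK$ with $K = \SO(p)$, which gives a diffeomorphism $NA \to \cE$ sending $(n,a)$ to $[na]_\cE$. Under this identification the set $\cS$ is literally $N^+ A^+_{\epsilon_{\cS}}$, so both claims can be checked inside $NA$ once we have a reduction procedure producing a $\gamma \in \Gamma$ that moves a prescribed point into this subset and a bound on the elements $\gamma$ that can relate two points of $\cS$.

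For the covering claim I would work with the geometric model of $\Gamma \backslash G$ as unit-covolume lattices in $\R^p$. Given a lattice $\Lambda$, iteratively choose a shortest primitive vector $v_1 \in \Lambda$, extend it to a $\Z$-basis $v_1,\dots,v_p$ of $\Lambda$, and recurse on $\Lambda/\Z v_1 \subset \R^p/\R v_1$. Arranging the $v_i$ as the rows of a matrix and performing Gram--Schmidt produces an Iwasawa decomposition whose diagonal part $a = \diagmat(t_1,\dots,t_p)$ has $t_i$ equal to the length of $v_i$ modulo $\R\langle v_1,\dots,v_{i-1}\rangle$. Minkowski's theorem on successive minima, applied to each quotient lattice, supplies a dimension-dependent constant $\epsilon(p) > 0$ with $t_i \ge \epsilon(p) t_{i+1}$. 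A final multiplication on the left by an upper-triangular integer matrix reduces each off-diagonal entry into $[-1/2, 1/2]$ without altering $a$. Setting $\epsilon_{\cS} = \epsilon(p)$ then gives $\Gamma \cS = \cE$.

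For the finiteness claim the strategy is to show that the subset $\{\gamma \in G : \gamma \cS \cap \cS \neq \emptyset\}$ is bounded in the operator norm, whence its intersection with the discrete group $\Gamma$ is finite. Suppose $\gamma n_1 a_1 K = n_2 a_2 K$ with $n_i \in N^+$ and $a_i \in A^+_{\epsilon_{\cS}}$, so $\gamma = n_2 a_2 k a_1^{-1} n_1^{-1}$ for some $k \in K$. Expanding entrywise and using the two structural facts that (a) the entries of $n_i$ lie in $[-1/2,1/2]$ and (b) the diagonal-ratio condition forces $t^{(i)}_j / t^{(i)}_{j'} \ge \epsilon_\cS^{|j-j'|}$ for $j < j'$, I expect to obtain that every matrix entry of $\gamma$ is bounded by a constant depending only on $p$ and $\epsilon_{\cS}$. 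Since $\gamma$ has integer entries, there are then only finitely many possibilities.

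The main obstacle is the finiteness step: while the diagonal entries $t_j^{(i)}$ themselves can be arbitrarily large or small on $A^+_{\epsilon_\cS}$, one must argue that the conjugation $a_2 k a_1^{-1}$ together with the bounded unipotent factors cannot produce unbounded entries in $\gamma$. The key point to check carefully is that any entry of $\gamma$ above the diagonal receives a factor of $t^{(2)}_j / t^{(1)}_{j'}$ with $j \le j'$, which the $A^+_{\epsilon_\cS}$-condition bounds from above; the below-diagonal entries are handled symmetrically after noting that $k$ is orthogonal and hence bounded. This is the classical Borel--Harish-Chandra reduction argument specialized to $\SL(p,\Z)$, and once the two combinatorial bounds above are verified everything else is routine.
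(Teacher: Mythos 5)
The paper offers no proof of this lemma: it simply points to Borel and Harish-Chandra \cite{BorHar-Cha}, so your argument is a self-contained attempt at the classical reduction-theory proof and needs to be checked on its own terms.

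The covering step is essentially the right idea but has the indexing reversed relative to the paper's conventions. The paper takes $N$ to be \emph{upper} unitriangular and defines $A^+_\epsilon$ by $t_i \ge \epsilon t_{i+1}$, so in the Iwasawa decomposition $g = nak$ the diagonal entry $t_i$ is the norm of the $i$-th row of $g$ \emph{modulo the span of the later rows} $v_{i+1},\dots,v_p$ (i.e.\ Gram--Schmidt from the bottom up). Your recursion, which picks a shortest primitive vector as $v_1$ and orthogonalizes from the top, produces a \emph{lower}-unitriangular Iwasawa decomposition and yields the reversed inequality $t_i \le C\, t_{i+1}$; to match the paper you should instead place the shortest primitive vector as $v_p$, recurse on $\Lambda/\Z v_p$, and orthogonalize from the bottom. (The inequality itself is also not Minkowski's theorem: it is the elementary fact that a shortest vector of $\Lambda/\Z v_p$ lifts to a vector of $\Lambda$ whose projection onto $v_p$ has size at most $\tfrac12\|v_p\|$ and whose length is at least $\|v_p\|$.) These are fixable slips, but they need to be fixed.

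The finiteness step, however, has a genuine gap that a matrix-norm bound cannot close. Your stated plan is to show that the set $\{g \in G : g\cS \cap \cS \ne \emptyset\}$ is bounded in operator norm and then intersect with $\Gamma$. That set is in fact unbounded: since $[I]_\cE \in \cS$ and $a[I]_\cE = [a]_\cE \in \cS$ for every $a \in A^+$, it contains all of $A^+$ (for instance every $\diagmat(t,1,\dots,1,t^{-1})$ with $t \ge \epsilon_{\cS}^{-1}$). The underlying problem with the entry-wise estimate is that in $\gamma = n_2 a_2 k a_1^{-1} n_1^{-1}$ the condition $t_i \ge \epsilon_\cS t_{i+1}$ only controls ratios \emph{within} $a_1$ or \emph{within} $a_2$; it gives no bound whatsoever on cross-ratios $t^{(2)}_j / t^{(1)}_{j'}$, which are exactly what appear in the matrix entries. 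Finiteness genuinely uses that $\gamma$ is integral: one must first show that $a_1 a_2^{-1}$ is bounded (for $\SL(p,\Z)$ this can be done by observing that $\Z^p n_1 a_1$ and $\Z^p n_2 a_2$ are the same lattice up to rotation, since $\gamma n_1 a_1 = n_2 a_2 k$ and $\Z^p\gamma = \Z^p$, and then comparing successive minima to the diagonal parts as in Lemma~\ref{lem:phiFlag}), and even after that one needs an additional argument to bound the remaining factors, as the $\SL_2$ computation with $\mathrm{Im}(\gamma z) = y/|cz+d|^2$ already illustrates. This comparison of the two $A$-parts is the real content of the Siegel property and is precisely the step your proposal omits.
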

We define $A^+:=A^+_{\epsilon_{\cS}}$.  Translates of $\cS$ cover all
of $\cE$, so we can define a map $\rho:\cE\to \Gamma$ such that
$\rho(\cS)=I$ and $x\in \rho(x)\cS$ for all $x$.  As in
Section~\ref{sec:overview}, we define $\bar{f}_0:\tau^{(0)}\to
K_\Gamma$ by $\bar{f}_0(x)=\rho(f(x))$.

The inclusion $A^+ \hookrightarrow \cS$ is a Hausdorff equivalence:
\begin{lemma}\label{lem:easyHausdorff}
  Give $A$ the riemannian metric inherited from its inclusion in $G$, so that
  $$d_{A}(\diagmat(d_1,\dots,d_p),\diagmat(d'_1,\dots,d'_p))=\sqrt{\sum_{i=1}^p \left|\log \frac{d'_i}{d_i}\right|^2}.$$
  \begin{itemize}
  \item There is a $c$ such that if $x\in \cS$, then
    $d_\cE(x,[A^+]_\cE)\le c$.
  \item If $x,y\in A^+$, then $d_{A}(x,y)=d_{\cS}(x,y)$.
  \end{itemize}
\end{lemma}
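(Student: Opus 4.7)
The plan is to handle the two bullets separately, both by reducing to concrete computations in the Siegel coordinates. For the first bullet, given a point $x=[na]_\cE\in\cS$ with $n\in N^+$ and $a\in A^+$, the natural candidate for a nearby point of $[A^+]_\cE$ is $[a]_\cE$ itself. Using left $G$-invariance of the $\cE$ metric, I would rewrite
\[ d_\cE([na]_\cE,[a]_\cE)=d_\cE([a^{-1}na]_\cE,[I]_\cE), \]
and then use the fact that conjugation by $a^{-1}$ rescales the $(i,j)$-entry of $n$ (for $i<j$) by the factor $t_j/t_i$. The Siegel condition $t_i\ge \epsilon_{\cS}\,t_{i+1}$ forces $t_j/t_i\le \epsilon_{\cS}^{-(j-i)}\le \epsilon_{\cS}^{-p}$, and since the entries of $n$ already lie in $[-1/2,1/2]$, the conjugate $a^{-1}na$ ranges over a compact subset of $N$ depending only on $p$ and $\epsilon_{\cS}$. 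Its image in $\cE$ is then a bounded neighborhood of $[I]_\cE$, yielding the desired constant $c$.

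For the second bullet, the plan is to chain the comparisons
\[ d_\cE(x,y)\le d_\cS(x,y)\le d_A(x,y)\le d_\cE(x,y) \]
for $x,y\in A^+$, from which all three distances must agree. The leftmost holds because every rectifiable path in $\cS$ is a rectifiable path in $\cE$. The rightmost rests on the standard fact that the image of the positive part of $A$ in $\cE$ is a totally geodesic Euclidean flat of the nonpositively curved symmetric space $\cE$: the orbit map $a\mapsto [a]_\cE$ is an isometric embedding of a Euclidean space whose induced length metric is exactly the $d_A$ formula in the statement, so the $\cE$-geodesic joining two points of $[A^+]_\cE$ stays inside $[A]_\cE$ and has length $d_A(x,y)$.

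The real content is the middle inequality $d_\cS(x,y)\le d_A(x,y)$, for which I would exhibit an explicit short path in $\cS$. Passing to logarithmic coordinates $s_i=\log t_i$ identifies $A$ with the Euclidean hyperplane $\{\sum s_i=0\}$ and identifies $A^+$ with the convex polyhedral region cut out by the linear inequalities $s_i-s_{i+1}\ge \log\epsilon_{\cS}$. Convexity of this region ensures that the straight-line geodesic joining two points of $A^+$ remains in $A^+\subset\cS$ and has length exactly $d_A(x,y)$. I do not expect a serious obstacle in either part; the only point requiring care is to get the direction of the conjugation ratios correct in the first bullet, so that the relevant bound is $\epsilon_{\cS}^{-(j-i)}$ rather than something uncontrolled.
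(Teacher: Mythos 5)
Your proof of the first bullet is essentially the same as the paper's: both replace $x=[na]_\cE$ by $[a(a^{-1}na)]_\cE$ and bound $a^{-1}na$ using the Siegel inequalities. One small point in your favor: the paper writes $\|a^{-1}na\|_\infty\le\epsilon_{\cS}^p$, which cannot be literally correct since $\epsilon_{\cS}<1$ and $a^{-1}na$ has $1$'s on its diagonal; your bound $t_j/t_i\le\epsilon_{\cS}^{-(j-i)}\le\epsilon_{\cS}^{-(p-1)}$ is the right one.

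For the second bullet you reach the same conclusion by a genuinely different route. The paper proves the nontrivial direction $d_A(x,y)\le d_{\cS}(x,y)$ directly, by observing that the Iwasawa projection $\cS\to A^+$, $na\mapsto a$, is distance-decreasing, while the easy direction $d_{\cS}(x,y)\le d_A(x,y)$ follows from the $A$-geodesic staying inside $\cS$ (the log-convexity you note). You instead close a loop through $\cE$: $d_{\cE}\le d_{\cS}\le d_A\le d_{\cE}$, with the last inequality supplied by the fact that $[A]_\cE$ is a totally geodesic Euclidean flat in $\cE$. Both arguments are correct. Your version gives a small extra payoff, showing that the $\cE$-, $\cS$-, and $A$-geodesics between two points of $A^+$ coincide so that all three distances agree, but it invokes total geodesy of flats in the symmetric space; the paper's argument stays entirely inside the Siegel set and needs only that the horospherical projection onto $A^+$ is $1$-Lipschitz.
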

\begin{proof}
  For the first claim, note that if $x=[na]_\cE$, then $x=[a(a^{-1} n
  a)]_\cE$, and $a^{-1}na\in N$.  Furthermore,
  $$\|a^{-1}na\|_\infty\le \epsilon_\cS^p,$$
  so 
  $$d_\cE([x]_\cE,[a]_\cE)\le d_G(I,a^{-1}na)$$
  is bounded independently of $x$.

  For the second claim, we clearly have $d_{A}(x,y)\ge
  d_{\cS}(x,y)$.  For the reverse inequality, it suffices to note that
  the map $\cS\to A^+$ given by $na\mapsto a$ for all $n\in N^+$,
  $a\in A^+$ is distance-decreasing.
\end{proof}
Siegel conjectured that the quotient map from $\cS$ to $\cM$ is also a
Hausdorff equivalence, that is:
\begin{thm}\label{thm:SiegConj}
  There is a $c$ such that if $x,y\in \cS$, then 
  $$d_{\cS}(x,y)-c\le d_{\cM}([x]_\cM,[y]_\cM)\le d_{\cS}(x,y)$$
\end{thm}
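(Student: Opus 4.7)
The upper bound $d_\cM([x]_\cM,[y]_\cM)\le d_\cS(x,y)$ is immediate: the quotient $\cE\to\cM$ is a Riemannian covering, hence $1$-Lipschitz, and the inclusion $\cS\hookrightarrow\cE$ satisfies $d_\cE\le d_\cS$. All of the content lies in the lower bound, and the plan is to lift a minimizing path in $\cM$ to $\cE$ and then ``fold'' it into $\cS$. Given a minimizing path $\beta:[0,L]\to\cM$ with $L=d_\cM([x]_\cM,[y]_\cM)$, I lift to $\tilde\beta:[0,L]\to\cE$ with $\tilde\beta(0)=x$, so that $\tilde\beta(L)=\gamma y$ for some $\gamma\in\Gamma$. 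Define the folded path $\hat\beta(t):=\rho(\tilde\beta(t))^{-1}\,\tilde\beta(t)\in\cS$. On each maximal interval where $\rho\circ\tilde\beta$ is constant, $\hat\beta$ is an isometric translate of the corresponding piece of $\tilde\beta$, so its smooth pieces have total length exactly $L$. The jumps of $\hat\beta$ occur at times $t^*$ when $\tilde\beta(t^*)\in\gamma_1\cS\cap\gamma_2\cS$ for distinct $\gamma_1,\gamma_2$; the left and right limits of $\hat\beta$ at $t^*$ are then points $p,\delta p\in\cS$ with $\delta=\gamma_1^{-1}\gamma_2$ belonging to the finite set $F=\{\gamma\in\Gamma:\gamma\cS\cap\cS\ne\emptyset\}$ furnished by Lemma~\ref{lem:redThe}.

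The crux is a \emph{bridging lemma}: for every $\delta\in F$, $\sup\{d_\cS(p,\delta p):p,\delta p\in\cS\}$ is finite, so each jump can be bridged within $\cS$ by a path of bounded length. I would prove this by a case analysis on the Levi--unipotent structure of $\delta$. If $\delta$ lies in the unipotent radical of a parabolic subgroup, it preserves the Iwasawa $A^+$-component up to bounded error and changes the $N^+$-component by a bounded amount; then $p$ and $\delta p$ share their $A^+$-coordinate and are bridged using the isometric embedding $A^+\hookrightarrow\cS$ of Lemma~\ref{lem:easyHausdorff}. If $\delta$ involves a nontrivial Weyl element, the requirement that both $p$ and $\delta p$ satisfy the Siegel inequalities forces $p$ into a bounded subset of $\cS$ (the locus where $A^+$ meets its $\delta$-image), so $d_\cS(p,\delta p)$ is bounded by the diameter of this subset.

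Bridging all jumps converts $\hat\beta$ into a continuous path in $\cS$ from $x$ to a $\Gamma$-translate of $y$ which itself lies in $\cS$; one further bridging step closes it back to $y$, giving $d_\cS(x,y)\le L+N\cdot C+C$ where $N$ is the number of jumps and $C$ is the bridging constant. The main obstacle is sharpening this bi-Lipschitz estimate to the additive form $d_\cS(x,y)\le d_\cM([x]_\cM,[y]_\cM)+c$ of the theorem, which requires bounding $N$ by a constant independent of $L$. I expect to establish this by exploiting that $\beta$ is \emph{minimizing}: choosing $\gamma$ to minimize $d_\cE(x,\gamma y)$ over $\Gamma$ should bring $\gamma y$ into a uniformly bounded region of $\cE$ relative to $x$ in the $A^+$-direction, so that the lifted geodesic $\tilde\beta$ can cross only a bounded number of $\cS$-translate faces before reaching its endpoint. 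This is a reduction-theoretic rigidity phenomenon specific to arithmetic lattices in $\SL(p,\R)$, and is where I expect the proof to be most delicate.
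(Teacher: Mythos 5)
First, note that the paper does not prove Theorem~\ref{thm:SiegConj} at all: it is Siegel's conjecture, quoted with references to \cite{Leuzinger,Ji,Ding}, and the surrounding text (e.g.\ Corollary~\ref{cor:Hausdorff}) uses it as a black box. So your task was to reproduce a known but genuinely hard theorem, and your sketch does not close the hard part. The easy direction is fine (though the quotient $\cE\to\cM$ is not a Riemannian covering, since $\Gamma$ has torsion; it is merely $1$-Lipschitz, which is all you need). The folding construction gives, at best, $d_\cS(x,y)\le L+\sum(\text{jump costs})$ with $L=d_\cM([x]_\cM,[y]_\cM)$, and the theorem requires the total jump cost to be bounded by a constant independent of $L$. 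You explicitly defer this (``I expect to establish this\dots''), but it is not a technical refinement: it is the entire content of the statement. A path of length $L$ through the thick part, or through a region of the cusp where only some of the ratios $a_i/a_{i+1}$ are large, meets on the order of $L$ distinct translates $\gamma\cS$, so the count $N$ of jumps is in general proportional to $L$; even with a uniform bridging constant you only get a multiplicative inequality $d_\cS\lesssim C\,d_\cM+c$, which is strictly weaker than the additive bound claimed (and the additive form is what Corollary~\ref{cor:Hausdorff} needs). The hoped-for rescue --- that a minimizing geodesic between two points of $\cS$ crosses only boundedly many translates --- is itself a reduction-theoretic statement of essentially the same depth as Siegel's conjecture, and you give no argument for it; nothing in Lemma~\ref{lem:redThe} or \ref{lem:easyHausdorff} yields it.

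The bridging lemma also has gaps as stated. Elements $\delta$ of the finite set $F$ need not fall into your two cases (unipotent radical of a parabolic, or ``involves a nontrivial Weyl element'' ): the dichotomy is not exhaustive and the mixed cases are exactly the delicate ones. In the Weyl-type case your argument is incorrect as written: for instance if $\delta$ is the signed permutation exchanging $z_1$ and $z_2$, the condition $p,\delta p\in\cS$ only forces the ratio $a_1/a_2$ to be bounded, while $p$ may be arbitrarily deep in the remaining cusp directions, so the locus is unbounded and you cannot conclude by ``the diameter of this subset''; one would instead have to estimate the displacement $d(p,\delta p)$ directly on this unbounded set. Finally, you need bounds in the interior path metric $d_\cS$ of the Siegel set, not just in $d_\cE$; since the theorem is precisely a comparison between the interior metric of $\cS$ and the quotient metric, this step must be done without circularity. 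In short, the approach is a reasonable heuristic but the proof is missing at the decisive point; for the purposes of this paper the correct move is to cite \cite{Leuzinger,Ji,Ding}, as the author does, rather than attempt a short self-contained argument.
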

Proofs of this conjecture can be found in \cite{Leuzinger,Ji,Ding}.
One consequence is that $A^+$ is Hausdorff equivalent to $\cM$, and it
will be helpful to have a map $\phi_\cM:\cM\to A^+$ which realizes
this Hausdorff equivalence.  Ji and MacPherson \cite{JiMacPherson}
used precise reduction theory to define such a map in a more general
setting.  In the special case that $G=\SL(n)$ and $\Gamma=\SL(n,\R)$,
their map and the map $\phi_\cM$ that we will define differ by a
bounded distance.

Any point $x\in \cE$ can be written as $x=[\gamma na]_\cE$ for some
$\gamma\in \Gamma$, $n\in N^+$ and $a\in A^+$ in at most finitely many
ways.  These decompositions have the following property:
\begin{cor}[see \cite{JiMacPherson}, Lemmas 5.13, 5.14]\label{cor:Hausdorff}
  There is a constant $c_{\phi}$ such that if $x,y\in \cE$,
  $\gamma,\gamma' \in \Gamma$, $n,n' \in N^+$ and $a,a'\in A^+$ are
  such that $x=[\gamma na]_\cE$ and $y=[\gamma' n'a']_\cE$, then
  $$|d_{\cM}([x]_\cM,[y]_\cM)- d_{A}(a,a')|\le c_\phi.$$ 

  In particular, if $[\gamma na]_\cE=[\gamma' n'a']_\cE$, then 
  $$d_{A}(a,a')\le c_\phi.$$
\end{cor}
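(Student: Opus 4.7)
The plan is to transfer the comparison from $\cM$ down to the flat $A^+$ through the chain of Hausdorff equivalences already available in the paper. First I would observe that, since $\gamma,\gamma'\in \Gamma$, one has $[x]_\cM=[na]_\cM$ and $[y]_\cM=[n'a']_\cM$, with both representatives $na,n'a'$ lying in $\cS$. Thus the question reduces to comparing $d_\cM(na,n'a')$ with $d_A(a,a')$, and the pair $\gamma,\gamma'$ plays no further role.

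Next I would apply Siegel's theorem (Theorem~\ref{thm:SiegConj}) to $na$ and $n'a'$ in $\cS$; this replaces the $\cM$-distance by the intrinsic $\cS$-distance up to an additive error depending only on $p$. Then I would use Lemma~\ref{lem:easyHausdorff}: the first bullet gives $d_\cS(na,a)\le c$ and $d_\cS(n'a',a')\le c$, and the second bullet gives $d_\cS(a,a')=d_A(a,a')$. A triangle inequality in $\cS$ then yields $|d_\cS(na,n'a')-d_A(a,a')|\le 2c$. Combining these two estimates bounds $|d_\cM([x]_\cM,[y]_\cM)-d_A(a,a')|$ by a constant $c_\phi$ depending only on $p$ and the constants in Theorem~\ref{thm:SiegConj} and Lemma~\ref{lem:easyHausdorff}.

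The ``in particular'' clause is then immediate: if $[\gamma na]_\cE=[\gamma' n'a']_\cE$ then certainly $[x]_\cM=[y]_\cM$, so the left-hand side is $d_A(a,a')$, which is therefore bounded by $c_\phi$.

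I do not expect any real obstacle here; the statement is essentially a bookkeeping consequence of Siegel's conjecture combined with the elementary geometry of Siegel sets. The only point that requires a bit of care is keeping track of constants and making sure that the Hausdorff constant from Lemma~\ref{lem:easyHausdorff} does not depend on the particular decomposition chosen (which it does not, since the bound $\|a^{-1}na\|_\infty\le \epsilon_\cS^p$ is uniform in $n\in N^+$).
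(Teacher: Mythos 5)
Your proposal is correct and follows essentially the same route as the paper: reduce to $\gamma=\gamma'=I$, apply Theorem~\ref{thm:SiegConj} to trade $d_\cM$ for the Siegel-set distance, and use both bullets of Lemma~\ref{lem:easyHausdorff} with the triangle inequality to compare with $d_A(a,a')$. The ``in particular'' clause is handled the same way in the paper, so there is nothing to add.
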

\begin{proof}
  Without loss of generality, we may assume that $\gamma=\gamma'=I$.
  Let $c$ be as in Theorem~\ref{thm:SiegConj} and let $c'$ be as in
  Lemma~\ref{lem:easyHausdorff}, so that
  $$d_{\cM}([na]_\cM,[a]_\cM)\le d_{\cE}([na]_\cE,[a]_\cE)\le c'.$$
  Then
  \begin{align*}
    |d_{\cM}([x]_\cM,[y]_\cM)- d_{A}(a,a')|&=|d_{\cM}([na]_\cM,[n'a']_\cM)- d_{A}(a,a')| \\
    &\le c+ |d_{\cS}([na]_\cE,[n'a']_\cE)- d_{\cS}([a]_\cE,[a']_\cE)| \\
    &\le c+ 2c'.
  \end{align*}
\end{proof}

We now define $\phi_\cM$.  
Any point $x\in \cE$ can be uniquely
written as $x=[\rho(x)na]_\cE$ for some $n\in N^+$ and $a\in A^+$.
Let $\phi:\cE\to A^+$ be the map $[\rho(x)na]_\cE\mapsto a$.  This is
not quite $\Gamma$-equivariant, but we can still define a map
$\phi_\cM:\cM\to A^+$ by choosing a lift $\tilde{x}\in \cE$ for all
$x\in \cM$ and defining $\phi_{\cM}(x)=\phi(\tilde{x})$.  By the
corollary, $\phi_{\cM}(x)$ is a Hausdorff equivalence with constant
$c_\phi$.

\section{Bounding group elements corresponding to edges}\label{sec:parabounds}
In this section, we will restrict the possible values of
$\rho(x)^{-1}\rho(y)$ when $x,y\in \cE$ and $d_{\cE}(x,y)\le 1$.  This
is a key step in extending $\bar{f}_0$ to the
$1$-skeleton of $\tau$.  

The possible values of $\rho(x)^{-1}\rho(y)$ depend on $\phi(x)$.  We
will construct a cover of $A^+$ by sets corresponding to parabolic
subgroups so that the possible values of
$\bar{f}_0(x)^{-1}\bar{f}_0(y)$ depend on which set $\phi(x)$ falls into.
If $P=U(d_1,\dots,d_r)$, where $\sum d_i=p$, let $s_i=\sum_{j=1}^i d_i$
for $0\le i\le r$.  Let
$$X_P(t)=\{\diagmat(a_1,\dots,a_p)\in A\mid t a_{i+1}<a_i \text{ if and only if }  i\in\{s_1,\dots,s_{r-1}\}\}.$$
These sets partition $A^+$ into $2^{p-1}$ disjoint subsets.

If $\phi(x)\in X_P(t)$ for some sufficiently large $t$, then the
geometry of the lattice corresponding to $x$ is quite distinctive.
Recall that if $\tilde{x}\in G$ is a representative of $x\in \cE$,
then we can construct a lattice $\Z^p \tilde{x}\subset \R^p$, and
different representatives of $x$ correspond to rotations of $\Z^p
\tilde{x}$.  Let
$$V(x,r)=\langle v\in \Z^p\mid \|v \tilde{x}\|_2 \le r  \rangle;$$
this corresponds to the subspace of the lattice generated by vectors
of length at most $r$, and is independent of the choice of
$\tilde{x}$.  As such, $V(x,r)$ is $\Gamma$-equivariant: if $\gamma\in
\Gamma$, then $V(\gamma x,r)=V(x,r)\gamma^{-1}$.  In many cases,
$\phi(x)$ and $\rho(x)$ determine $V(x,r)$.  Let
$z_1,\dots,z_p\in \Z^p$ be the standard generating set of $\Z^p$, and
let $Z_j=\langle z_{j},\dots, z_{p}\rangle$.
\begin{lemma}\label{lem:phiFlag}
  There is a $c_V>1$ such that if $x\in \cE$,
  $\phi(x)=\diagmat(a_1,\dots,a_p)$, and
  $$a_{j+1}c_V<r<c_V^{-1}a_{j},$$
  then $V(x,r)=Z_j\rho(x)^{-1}$.
\end{lemma}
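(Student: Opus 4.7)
The plan is to work with a specific representative $\tilde{x}=\rho(x)na\in G$ of $x$, where $n\in N^+$ and $a=\phi(x)\in A^+$, and to exploit the fact that $\rho(x)\in\SL(p,\Z)$ preserves $\Z^p$ under right multiplication. Making the substitution $v=u\rho(x)^{-1}$ in the definition of $V(x,r)$ converts the lemma into a claim about the ``model'' lattice $\Z^p na$: one shows that
\[
V_0:=\langle u\in\Z^p:\|una\|_2\le r\rangle
\]
equals $Z_j$, and then $V(x,r)=V_0\rho(x)^{-1}=Z_j\rho(x)^{-1}$ follows by right multiplication.

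To prove the containment $V_0\subseteq Z_j$, I would use a \emph{leading coordinate} argument. Given a nonzero $u\in\Z^p$, let $i_0$ be its smallest nonzero coordinate. Because $n\in N^+$ is upper triangular with $1$'s on the diagonal, the $i_0$-th entry of $un$ equals $u_{i_0}$, so the $i_0$-th entry of $una$ equals $u_{i_0}a_{i_0}$; hence $\|una\|_2\ge|u_{i_0}|a_{i_0}\ge a_{i_0}$. Iterating the defining inequality $a_i\ge\epsilon_\cS a_{i+1}$ of $A^+$ yields $a_{i_0}\ge\epsilon_\cS^{p-1}a_j$ whenever $i_0$ lies on the ``large'' side of the threshold. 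Choosing $c_V$ larger than $\epsilon_\cS^{-(p-1)}$ therefore forces $\|una\|_2\ge\epsilon_\cS^{p-1}a_j>a_j/c_V>r$ for such $u$, so no short $u$ has this property and consequently every short $u$ lies in $Z_j$.

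The reverse containment $V_0\supseteq Z_j$ I would prove by displaying each generator $z_i$ as a short vector in the lattice. The product $z_i na$ is the $i$-th row of $na$, namely $(0,\dots,0,a_i,n_{i,i+1}a_{i+1},\dots,n_{i,p}a_p)$. Applying $|n_{ik}|\le\tfrac12$ and the opposite-direction Siegel bound $a_k\le\epsilon_\cS^{-(k-j-1)}a_{j+1}$ for $k\ge j+1$ gives a uniform estimate $\|z_i na\|_2\le C\,a_{j+1}$, with $C$ depending only on $p$ and $\epsilon_\cS$. Enlarging $c_V$ so that $c_V>C$ then yields $\|z_i na\|_2\le Ca_{j+1}<r$, placing each generator in $V_0$.

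The principal obstacle is really just the bookkeeping: I must choose a single constant $c_V$, depending only on $p$ and $\epsilon_\cS$, that simultaneously dominates $\epsilon_\cS^{-(p-1)}$ (for the upper-bound/exclusion step) and $C(p,\epsilon_\cS)$ (for the lower-bound/inclusion step). Since both quantities are explicit functions of $p$ and $\epsilon_\cS$, taking their maximum suffices. A secondary subtlety lies at the boundary index between the ``large'' coordinates $a_1,\dots,a_j$ and the ``small'' coordinates $a_{j+1},\dots,a_p$: one must verify that the Siegel inequalities are iterated in the right direction and the right number of times, so that no generator of $Z_j$ is accidentally miscounted.
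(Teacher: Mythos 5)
Your overall strategy is the same as the paper's: reduce to the case $x\in\cS$ by right-multiplying by $\rho(x)^{-1}$, then control the model lattice $\Z^p na$ from below via the leading coordinate (exclusion) and from above via the explicit rows of $na$ (inclusion). However, the inclusion half fails at the boundary index, which is exactly the subtlety you flagged but did not resolve. With the paper's definition $Z_j=\langle z_j,\dots,z_p\rangle$, the generators you must exhibit as short are $z_j,z_{j+1},\dots,z_p$, and for $i=j$ your claimed estimate $\|z_j na\|_2\le C\,a_{j+1}$ is false: the $j$-th entry of the row $z_j na$ is $a_j$ itself, so $\|z_j na\|_2\ge a_j$, and the stated hypothesis $r<c_V^{-1}a_j$ forces $\|z_j na\|_2\ge a_j>c_V r>r$. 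So $z_j$ is not a vector of length at most $r$, and no enlargement of $c_V$ helps; worse, your own leading-coordinate argument, applied with $i_0=j$ as well as $i_0<j$, shows that every vector of length at most $r$ has leading index at least $j+1$, so the group generated by such vectors is $Z_{j+1}$, not $Z_j$. Thus your two halves prove $V(x,r)\subseteq Z_j\rho(x)^{-1}$ but only $Z_{j+1}\rho(x)^{-1}\subseteq V(x,r)$; they never meet.

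The mismatch is an off-by-one that is visible in the paper as well: the paper's own proof establishes the conclusion $V(x,r)=Z_j\rho(x)^{-1}$ for a window one step higher than the one in the statement, namely $\sqrt{p}\,\epsilon_{\cS}^{-p}a_j\le r<\epsilon_{\cS}^{p}a_{j-1}$. There $z_j$ sits on the short side (so the inclusion $Z_j\subseteq V(x,r)$ goes through by exactly your row computation), and the exclusion only has to rule out leading indices $k\le j-1$, for which $a_k\ge\epsilon_{\cS}^{p}a_{j-1}>r$. A correct write-up along your lines must therefore either place the window between $a_j$ and $a_{j-1}$ and conclude $Z_j$, or keep the stated window between $a_{j+1}$ and $a_j$ and conclude $Z_{j+1}\rho(x)^{-1}$ (equivalently, read $Z_j$ as $\langle z_{j+1},\dots,z_p\rangle$). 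As written, your proposal mixes the two conventions: the exclusion step uses the stated bound $r<c_V^{-1}a_j$, while the inclusion step silently assumes every generator of $Z_j$ has support in columns $\ge j+1$; consequently it does not prove the statement, and the ``bookkeeping'' you deferred is precisely where the argument breaks.
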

\begin{proof}
  It suffices to show that if the hypotheses hold and $x\in \cS$, then
  $V(x,r)=Z_j$.  There is an $n=\{n_{ij}\}\in N^+$ such that
  $x=[n\phi(x)]_\cE$, and if $\tilde{x}=n\phi(x)$, then
  \begin{align*}
    z_j \tilde{x} &= z_j n\phi(x) \\
    &= a_j z_j+\sum_{i=j+1}^p n_{ji} z_ia_{i}.
  \end{align*}
  Since
  $|n_{ji}|\le 1/2$ when $i>j$ and $a_{i+1}\le a_i \epsilon_{\cS}^{-1}$,
  we have
  $$\|z_j \tilde{x}\|_2 \le a_j\sqrt{p}\epsilon_{\cS}^{-p},$$
  so 
  $$V(x,a_j\sqrt{p}\epsilon_{\cS}^{-p})\supset Z_j.$$  On the
  other hand, if $v\not \in Z_j$, then $v=\sum_i v_i z_i$ for some
  $v_i\in \Z$.  Let $k$ be the smallest $k$ such that $v_k\ne 0$; by
  assumption, $k<j$.  The $z_{k}$-coordinate of $v\tilde{x}$ is
  $v_{k} a_{k}$, so
  $$\|v n\phi(y)\|_2 \ge |a_{k}|> a_{j-1} \epsilon_{\cS}^{p}$$
  and thus if $t<a_{j-1} \epsilon_{\cS}^{p}$, then $V(x,t)\subset Z_j$.
  Therefore, if 
  $$a_j\sqrt{p}\epsilon_{\cS}^{-p}\le t<a_{j-1} \epsilon_{\cS}^{p},$$
  then $V(\tilde{x},t)=Z_j$.  
\end{proof}
In particular, if $\phi(x)\in X_P(2 c_V^2)$, then
$a_{s_i+1}c_V<c_V^{-1}a_{s_i}$ and we can find $r_i$ such
that $V(x,r_i)=Z_{s_i} \rho(x)^{-1}$.  

Let $M_P$ be the subgroup of $P$ consisting of block diagonal
matrices, so that $M_P$ contains
$\SL(d_1)\times \dots \times \SL(d_n)$
as a finite index subgroup.
Let $N_P\subset P$ be the subgroup of block upper triangular matrices
whose diagonal blocks are the identity matrix.  Any element $z\in P$
can be uniquely decomposed as a product $z=nm$, where $n\in N_P$ and
$m\in M_P$; we call $m$ the $P$-reductive part of $z$ and $n$ the
$P$-unipotent part.  We will show that if $d(x,y)\le 1$, then
$z=\rho(x)^{-1}\rho(y)\in P$ for some $P$, where the $P$-reductive
part of $z$ has coefficients bounded independently of $\ell$ and the
$P$-unipotent part has coefficients at most exponential in $\ell$.
\begin{lemma} \label{lem:paraBounds} Let $p\ge 3$.  There is a $t_0>0$
  and a $c_\rho>0$ such that for all $P\in \cP$ and all $x,y\in \cE$ such
  that $\phi(x)\in X_P(t_0)$ and $d_{\cE}(x,y)\le 1$,
  we can decompose $\rho(x)^{-1}\rho(y)$ as a product
  $\rho(x)^{-1}\rho(y)=nm$, where $n\in N_P(\Z)$, $m\in M_P(\Z)$,
  $d_{\Gamma}(I,m)<c_\rho$, and $\|n\|_2 \le c_\rho e^{c_\rho d_{\cE}(x,[I]_\cE)}$.
\end{lemma}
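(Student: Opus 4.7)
The plan is to show $\gamma := \rho(x)^{-1}\rho(y) \in P(\Z)$ via flag preservation, then derive an explicit formula for $\gamma$ from the Siegel decompositions of $x' := \rho(x)^{-1}x$ and $y' := \rho(y)^{-1}y$ and read off the bounds on $m$ and $n$ by matrix arithmetic. For the flag step: because $\phi(x) \in X_P(t_0)$, Lemma~\ref{lem:phiFlag} supplies, at each gap index, a radius $r_i$ such that $V(x,r_i)$ is $\rho(x)^{-1}$ times the corresponding flag subspace of $P$. Since $d_\cE(x,y)\le 1$, I can choose lifts $\tilde x, \tilde y \in G$ with $d_G(\tilde x,\tilde y)\le 1$, and then $\|v\tilde y\|_2/\|v\tilde x\|_2$ is pinched between two fixed constants for all $v\in\R^p$. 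Provided $t_0$ is taken large enough to absorb this distortion factor, the admissible radius intervals for $x$ and $y$ in Lemma~\ref{lem:phiFlag} overlap, so a single $r_i'$ works for both: $V(x,r_i')=V(y,r_i')$. This forces every flag subspace of $P$ to be stable under $\gamma$, so $\gamma \in P(\R)\cap\Gamma = P(\Z)$, and the decomposition $\gamma=nm$ into $n\in N_P(\Z)$, $m\in M_P(\Z)$ is obtained by reading off integer diagonal blocks.

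To bound $m$ and $n$, use Siegel lifts $\tilde x'=n_xa_x$ and $\tilde y'=n_ya_y$ with $a_x=\phi(x), a_y=\phi(y)\in A^+$. Since $\gamma y'=\rho(x)^{-1}y$, the inequality $d_\cE(x',\gamma y')=d_\cE(x,y)\le 1$ produces $k\in\SO(p)$ and $g\in G$ of bounded norm with $\gamma\tilde y' k=\tilde x' g$. Rearranging yields the key identity
$$\gamma = n_x M n_y^{-1}, \qquad M := a_x(gk^{-1})a_y^{-1},$$
so $M_{ij}=a_{x,i}(gk^{-1})_{ij}a_{y,j}^{-1}$ with $|(gk^{-1})_{ij}|$ bounded. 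Corollary~\ref{cor:Hausdorff} gives $d_A(a_x,a_y)\le c_\phi+1$, hence $|M_{ij}|\le C\,a_{x,i}/a_{x,j}$. The hypothesis $\phi(x)\in X_P(t_0)$ (which forbids gaps within a single block) together with the Siegel inequality $a_i\ge\epsilon_\cS a_{i+1}$ bounds this ratio by a constant whenever $i,j$ lie in the same block or $i$ lies in a block strictly after $j$. Thus $M$ has uniformly bounded diagonal-block entries, and left- and right-multiplication by the upper-unipotent $n_x,n_y^{-1}$ (off-diagonal entries in $[-\tfrac12,\tfrac12]$) preserves the uniform bound on the diagonal blocks of $\gamma$. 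So $m$ has bounded integer entries, and since $M_P(\Z)$ is generated by its intersection with $\Sigma$, this forces $d_\Gamma(I,m)\le c_\rho$.

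For $n$, the crude product estimate
$$\|\gamma\|_2 \le \|n_x\|_2\,\|a_x\|_2\,\|gk^{-1}\|_2\,\|a_y^{-1}\|_2\,\|n_y^{-1}\|_2 \le c_\rho\,e^{c_\rho R}$$
with $R:=d_\cE(x,[I]_\cE)$ follows from $\|a_x\|_2,\|a_y^{-1}\|_2\le e^{d_A(I,a_x)+c}\le e^{cR}$, which is another application of Corollary~\ref{cor:Hausdorff}. Since each diagonal block of $m$ lies in $\GL(d_i,\Z)$ with bounded entries, $\|m^{-1}\|_2$ is bounded, and $\|n\|_2=\|\gamma m^{-1}\|_2\le c_\rho e^{c_\rho R}$, which is what we want. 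The delicate step in the whole argument is the flag identification above: one must verify carefully that the bounded multiplicative distortion between the lattices attached to $x$ and to $y$ does not push the radius window in Lemma~\ref{lem:phiFlag} out of its valid range, which is precisely why $t_0$ must be taken strictly larger than the $2c_V^2$ threshold that suffices for a single point.
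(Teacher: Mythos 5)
Your proposal is correct, and while its first and last steps coincide with the paper's, the middle step is genuinely different. The paper establishes $\gamma=\rho(x)^{-1}\rho(y)\in P(\Z)$ by the same flag argument you give (Lemma~\ref{lem:phiFlag}, the bounded multiplicative distortion of lattice vectors when $d_\cE(x,y)\le 1$, and Corollary~\ref{cor:Hausdorff} to keep the radius window valid for $y$), and it bounds $\|\gamma\|_2$ by essentially your product estimate coming from $\gamma=n_x a_x h a_y^{-1} n_y^{-1}$ with $h$ bounded. Where you diverge is the bound on the reductive part $m$: the paper uses the Langlands projection $\mu:P\to M_P$, shows it descends to a Lipschitz map $\mu_\cE:\cE\to M_P/K_P$, proves that $\mu_\cE(\beta_P\cap\cS)$ is a bounded set $\omega_P$, and then invokes proper discontinuity of the $M_P(\Z)$-action on $M_P/K_P$ to conclude there are only finitely many possibilities for $m$; you instead read the bound directly off the identity $\gamma=n_xMn_y^{-1}$ with $M=a_xha_y^{-1}$, noting that the $X_P(t_0)$ condition plus the Siegel inequality bounds $|M_{kl}|$ at every position where the block of $k$ is not earlier than the block of $l$, and that by triangularity of $n_x$ and $n_y^{-1}$ only such positions contribute to the diagonal blocks of $\gamma$, which equal those of $m$. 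Your route is more elementary and yields explicit entry bounds, but it is tied to the explicit Siegel coordinates of $\SL(p)$, whereas the paper's projection argument is the one that suggests the generalization to other lattices mentioned after the lemma. Two small repairs to your write-up: in the flag step, overlap of the radius windows alone gives $V(x,r')=Z_{s_i}\rho(x)^{-1}$ and $V(y,r')=Z_{s_i}\rho(y)^{-1}$ but not their equality --- you must combine the pinching $V(x,C^{-1}r')\subset V(y,r')\subset V(x,Cr')$ with the fact that $C^{\pm 1}r'$ stays inside the window for $x$ (your choice of large $t_0$ does permit this, but the inference should be stated); and $d_\Gamma(I,m)\le c_\rho$ follows because there are only finitely many integer matrices with entries below a fixed bound (uniformly over the finitely many $P\in\cP$), not because $M_P(\Z)$ is generated by $\Sigma\cap M_P$.
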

\begin{proof}
  Let 
  $$t_0=2\exp(4(c_\phi+1))c_V^2,$$
  let $P=U(d_1,\dots,d_n)$, and let $x$ and $y$ be as in the
  hypothesis of the lemma.  By translating $x$ and $y$ by
  $\rho(x)^{-1}$, we may assume that $x\in \cS$ and thus $\rho(x)=I$.  We first claim that
  $\rho(y)\in P(\Z)$.

  Let $a_i,a'_i$ be such that $\phi(x)=\diagmat(a_1,\dots,a_p)$ and
  let $\phi(y)=\diagmat(a'_1,\dots,a'_p)$.  Let $r_i=a_{s_i}
  \sqrt{t_0}$.  We claim that for all $i$,
  $$Z_{s_i}=V(x,r_i)=V(y,r_i)=Z_{s_i}\rho(y)^{-1}.$$

  The fact that $Z_{s_i}=V(x,r_i)$ follows from
  Lemma~\ref{lem:phiFlag}; in fact,
  $V(x,e^{-1}r_i)=V(x,er_i)=Z_{s_i}$.

  Since $d_{\cE}(x,y)\le 1$,
  the lattices corresponding to $x$ and $y$ only differ by a small
  deformation; this deformation can change the length of a vector in
  the lattice by at most a factor of $e$.  Thus, if $v\in \Z^p$, then
  $$e^{-1}\le\frac{\|v\tilde{x}\|_2}{\|v\tilde{y}\|_2}\le e,$$
  and in particular, 
  $$V(x,e^{-1} r_i)\subset V(y,r_i)\subset V(x,e r_i).$$
  Since the outer two sets are equal, we have $V(x,r_i)=V(y,r_i)$.  

  Finally, we need to show that $V(y,r_i)=Z_{s_i}\rho(y)^{-1}$.  By
  the lemma, it suffices to show that
  $a'_{s_i}c_V<r_i<c_V^{-1}a'_{s_i}.$  By
  Corollary~\ref{cor:Hausdorff}, we know that
  $d_{A}(\phi(x),\phi(y))\le c_\phi+1$; in particular,
  $$\left|\log\frac{a_i}{a'_{i}}\right|\le  c_\phi+1,$$
  and $a'_{s_i}c_V<r_i<c_V^{-1}a'_{s_i}$ as desired.
  Thus $Z_{s_i}=Z_{s_i}\rho(y)^{-1}$ for all $i$, so $\rho(y)\in P$.  

  We decompose $\rho(y)$ as a product $\rho(y)=n_ym_y$,
  where $n_y\in N_P(\Z)$ and $m_y\in M_P$ consists of the diagonal blocks of $\rho(y)$.  We will
  bound $m_y$ by constructing a map from $\cE$ to a product of
  symmetric spaces.

  Let $A_P\subset P$ be the subgroup consisting of diagonal matrices
  whose diagonal blocks are scalar matrices with positive
  coefficients; this is isomorphic to $(\R^+)^{n-1}$.  The parabolic
  subgroup $P$ can be uniquely decomposed according to the Langlands
  decomposition as $P=N_PM_PA_P$, and we can define a map $\mu:P\to
  M_P$ so that if $g=n m a$, where $n\in N_P$, $m\in M_P$, and $a\in
  A_P$, then $\mu(g)=m$.  Furthermore, since $M_P$ normalizes $A_P$
  and $N_P$, this is a homomorphism.

  This descends to a map on symmetric spaces; if we let
  $K_P=\SO(p)\cap P$, we get a map $\mu_{\cE}:\cE \to M_P/K_P$.  This
  map is Lipschitz.  Furthermore, if $p\in P$, $x\in \cE$, then
  $\mu_{\cE}(px)=\mu(p)\mu_{\cE}(x)$.

  This map can be interpreted geometrically.  Note that $M_P/K_P$ is a
  product of symmetric spaces of lower dimensions, so $\mu_{\cE}$
  breaks a lattice in $\R^p$ into lattices in lower-dimensional
  subspaces.  Let $V_0=\{0\}$ and $V_i=Z_{s_i}$, so that $P$ preserves
  the flag $V_0\subset \dots\subset V_n=\Z^p$.  Then if $g\in G$ (not
  necessarily parabolic) is a representative of $x$, then
  $V_ig/V_{i-1}g$ is a $d_i$-dimensional lattice in $(\R\otimes
  V_i)g/(\R\otimes V_{i-1})g$.  This lattice generally does not have
  unit covolume, but we can rescale and possibly reflect it to a
  unit-covolume lattice.  These lattices correspond to a point in
  $$M_P/K_P=\SL(d_1,\R)/\SO(d_1)\times\dots\times \SL(d_n,\R)/\SO(d_n),$$
  and this point is $\mu_{\cE}(x)$.  

  The group $M_P$ acts on $M_P/K_P$ on the left, but this action is
  not cocompact.  We will show that $\mu_{\cE}(x)$ and $\mu_{\cE}(y)$
  lie near an orbit of this action and use this to show that $\rho(y)$
  is bounded.  Let $B:=B_{c_\phi+1}(X_P(t_0),A^+)$ be a neighborhood of
  $X_P(t_0)$ in $A^+$, so that $\phi(y)\in
  B$.  Let
  $$\beta_P=[P N^+  B]_\cE.$$
  If $z\in \cE$, $\rho(z)\in P$, and $\phi(z)\in
  B$, then $z\in \beta_P$; in particular, $x,y\in
  \beta_P$.

  We claim that the image of $\beta_P\cap \cS$ is a bounded set in
  $M_P/K_P$.  If $b\in \beta_P\cap \cS$, there is a unique
  decomposition $b= [n_b a_b]_\cE$, where $n_b\in N^+$ and $a_b\in B$,
  and $\mu_\cE(b)=[\mu(n_b)\mu(a_b)]_\cE$.  Since $N^+$ is compact,
  $\mu(n_b)$ is bounded.  Since $a_b\in B$, the ratio of two
  coefficients in a diagonal block of $a_b$ is bounded, and so
  $\mu(a_b)$ is bounded as well.  Thus $\mu_\cE(\beta_P\cap \cS)$ is
  bounded; call this set $\omega_P$.

  Since $x\in\beta_P\cap \cS$ and $y\in \beta_P\cap
  \rho(y)\cS=\rho(y)(\beta_P\cap \cS)$, we know $\mu_\cE(x)\in
  \omega_P$ and $\mu_\cE(y)\in m_y \omega_P$.  Since $M_P(\Z)$ acts
  properly discontinuously on $M_P/K_P$ and
  $d_{M_P/K_P}(\mu_\cE(x),\mu_\cE(y))\le \Lip(\mu_\cE)$, there are
  only finitely many possibilities for $m_y$.

  To bound $n_y$, write $x$ and $y$ as $x=n\phi(x)\SO(p)$ and
  $y=\rho(y)n'\phi(y)\SO(p)$ for some $n,n'\in N^+$.  Since $d_\cE(x,y)\le
  1$, there is a $c$ such that
  $$\|(n\phi(x))^{-1}\rho(y)n'\phi(y)\|_2<c.$$
  and thus
  \begin{align*}
    \|\rho(y)\|_2&< \|n\phi(x)\|_2\|(n\phi(x))^{-1}\rho(y)n'\phi(y)\|_2\|(n'\phi(y))^{-1}\|_2\\
    \log \|\rho(y)\|_2&<\log c+\log \|n\phi(x)\|_2+\log \|(n'\phi(y))^{-1}\|_2\\
    &=O(d_{\cE}(I,\phi(x)) + d_{\cE}(I,\phi(y)))
  \end{align*}
  By Corollary~\ref{cor:Hausdorff}, we see that $\log
  \|\rho(y)\|_2=O(d_{\cE}(I,x))$ as desired.
\end{proof}
The work of Ji and MacPherson \cite{JiMacPherson} suggests how this
construction might be extended to lattices in other symmetric spaces.
We can replace $\phi$ with a map from the quotient to the asymptotic
cone of the quotient and replace $X_P$ with a generalized Siegel
set for $P$ and get similar results.  

In the next section, we will need the following corollary, which tells
us that if $\Delta$ is a 2-cell of $\tau$, then all the edges of $\Delta$
satisfy the conditions of Lemma~\ref{lem:paraBounds} for a single
parabolic subgroup $P$.
\begin{cor} \label{cor:triBounds} Let $x_1,x_2,x_3\in \cE$ be such
  that the distance between any pair of points is at most $1$.  There
  is a $c_\rho'$ such that if $\phi(x_1)\in X_P(t_0)$, then for all
  $i,j$, we can decompose $\rho(x_i)^{-1}\rho(x_j)$ as a product
  $\rho(x_i)^{-1}\rho(x_j)=nm$, where $n\in N_P(\Z)$, $m\in M_P(\Z)$,
  $d_{\Gamma}(I,m)<c_\rho'$, and $\|n\|_2 \le c_\rho' e^{c_\rho'
    d_{\cE}(x,I)}$.

  In particular, if $\Delta$ is a 2-cell in $\tau$, we can choose $x$
  to be a vertex of $\Delta$ and let $P_{\Delta}\in \cP$ be such that
  $\phi(f(x))\in X_{P_\Delta}(t_0)$.  Then if $y$ and $z$ are vertices
  of $\Delta$, then $\bar{f}_0(y)^{-1}\bar{f}_0(z)$ can be decomposed
  as above.
\end{cor}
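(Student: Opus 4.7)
My plan is to deduce the corollary from Lemma~\ref{lem:paraBounds}, applied to the two ``base'' pairs $(x_1, x_2)$ and $(x_1, x_3)$, and then to combine the resulting factorizations using that $M_P$ normalizes $N_P$. The main obstacle is that Lemma~\ref{lem:paraBounds} requires $\phi$ of the \emph{left} endpoint of the pair to lie in $X_P(t_0)$, while our hypothesis supplies this only for $x_1$; routing through $x_1$ lets us circumvent this.

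Since $d_\cE(x_1, x_j)\le 1$ by hypothesis, Lemma~\ref{lem:paraBounds} applies to $(x_1, x_j)$ for each $j\in\{1,2,3\}$ (with $n_1 = m_1 = I$ in the trivial case $j=1$) and produces a factorization $\rho(x_1)^{-1}\rho(x_j) = n_j m_j$ with $n_j \in N_P(\Z)$, $m_j \in M_P(\Z)$, $d_\Gamma(I, m_j) < c_\rho$, and $\|n_j\|_2 \le c_\rho e^{c_\rho d_\cE(x_1,[I]_\cE)}$. For arbitrary $i,j$ I would then compute
$$\rho(x_i)^{-1}\rho(x_j) = m_i^{-1} n_i^{-1} n_j m_j = \bigl(m_i^{-1}(n_i^{-1} n_j) m_i\bigr)\bigl(m_i^{-1} m_j\bigr),$$
and observe that the first factor lies in $N_P(\Z)$, because $M_P$ normalizes $N_P$, while the second lies in $M_P(\Z)$.

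The required bounds follow without difficulty: the reductive factor $m_i^{-1} m_j$ has word length at most $2 c_\rho$, and the bound $d_\Gamma(I, m_i) < c_\rho$ restricts $m_i$ to a finite subset of $\Gamma$, giving uniform bounds on $\|m_i\|_2$ and $\|m_i^{-1}\|_2$ depending only on $p$ and $c_\rho$. Multiplying these by the exponential bounds on $\|n_i\|_2$ and $\|n_j\|_2$ and absorbing the resulting multiplicative constants and the factor of $2$ in the exponent into a renamed constant $c_\rho'$ yields the estimate on $\|n\|_2$ claimed in the corollary.

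Finally, the ``in particular'' statement is immediate from the main claim: since the sets $\{X_P(t_0)\}_{P\in\cP}$ partition $A^+$, for any vertex $v$ of a $2$-cell $\Delta$ there is a unique $P_\Delta \in \cP$ with $\phi(f(v)) \in X_{P_\Delta}(t_0)$, and the triangulation of Section~\ref{sec:overview} is arranged so that the $f$-images of the vertices of $\Delta$ lie pairwise within distance $1$ in $\cE$, so the main claim applies with $x_1 = f(v)$.
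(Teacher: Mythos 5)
Your proposal is correct and follows essentially the same route as the paper: the paper likewise applies Lemma~\ref{lem:paraBounds} to the pairs $(x_1,x_2)$ and $(x_1,x_3)$ and invokes the identity $\rho(x_3)^{-1}\rho(x_2)=(\rho(x_3)^{-1}\rho(x_1))(\rho(x_1)^{-1}\rho(x_2))$, leaving implicit exactly the algebra you spell out (conjugating the unipotent part by $m_i$, which stays in $N_P(\Z)$ since $M_P$ normalizes $N_P$, and absorbing the norm bounds into $c_\rho'$).
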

\begin{proof}
  This follows from the lemma for $\rho(x_1)^{-1}\rho(x_2)$ and
  $\rho(x_1)^{-1}\rho(x_3)$, and
  $$\rho(x_3)^{-1}\rho(x_2)=(\rho(x_3)^{-1}\rho(x_1))(\rho(x_1)^{-1}\rho(x_2)).$$
\end{proof}
\section{Constructing words representing edges}\label{sec:normalform}
We will use Lemma~\ref{lem:paraBounds} to extend $\bar{f}_0$ to a map
$\bar{f}_1:\tau^{(1)}\to K_\Gamma$.  This corresponds to choosing, for
each edge $e=(x,y)$, a word $w_e$ representing
$\bar{f}_0(x)^{-1}\bar{f}_0(y)$.

If $\phi(x)\in X_P(t_0)$, we will choose a $w_e$ which is a product of
boundedly many generators of $M_P$ and boundedly many words in
$\Sigma^*$ which each represent an elementary matrix in $N_P$.  One
difficulty is doing this consistently, so that the boundary of each
triangle satisfies this condition for a single $P$.  We will need two main
lemmas.  The first states that elementary matrices with large
coefficients can be represented by ``shortcuts''.  This is a key
ingredient in the proof of the theorem of Lubotzky, Mozes, and
Raghunathan \cite{LMRComptes} which states that when $p\ge 3$, the
word metric on $\Gamma$ is equivalent to the metric induced by the
Riemannian metric on $G$; see also \cite{RileyNav} for an explicit combinatorial construction.
\begin{lemma}[see \cite{LMRComptes}] \label{lem:shortcuts}
  If $p\ge 3$, then for every $i,j\in \{1,\dots,p\}$, $i\ne j$, and
  $x\in \Z$, there is a word $\short{e}_{ij}(x)$ representing
  $e_{ij}(x)$ which has length $O(\log |x|)$.
\end{lemma}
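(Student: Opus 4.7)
The plan is to leverage the Steinberg commutator identity
\[
  [e_{ij}(a), e_{jk}(b)] = e_{ik}(ab) \qquad (i, j, k \text{ distinct}),
\]
which is available in $\SL(p,\Z)$ precisely because $p \geq 3$ provides a third index $k$. This bilinear-in-coefficients relation is the engine of any shortcut: combining it with the binary expansion $x = \sum \epsilon_m 2^m$ and representing each $e_{ij}(2^m)$ as a nested commutator already yields a word of length $O((\log|x|)^2)$, so the task is to sharpen this to a single logarithmic factor.

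The sharpening proceeds by exhibiting a \emph{doubling element} $\gamma \in \SL(p,\Z)$, represented by a word of constant length in $\Sigma$, such that conjugation by $\gamma$ multiplies the exponent of a chosen elementary matrix by a fixed factor $c > 1$, modulo controlled corrections in other elementary matrices. A concrete choice is to embed the hyperbolic matrix $F = \left(\begin{smallmatrix}2&1\\1&1\end{smallmatrix}\right)$ as the upper-left $2\times 2$ block (and the identity elsewhere); a direct computation shows that conjugation by this $\gamma$ sends $e_{ik}$ to a product $e_{ik}^{a_1} e_{jk}^{a_2}$ with integer exponents of comparable size, and its eigenvalue $(3+\sqrt 5)/2 > 1$ causes these exponents to grow exponentially under iteration. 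Thus $\gamma^n e_{ik} \gamma^{-n}$ is a word of length $O(n)$ whose image is $e_{ik}(x)$ for $x$ of order $\bigl((3+\sqrt{5})/2\bigr)^{n}$, multiplied by correction terms whose exponents are themselves logarithmic in their values.

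To assemble the shortcut for an arbitrary $x$, I would write $x$ in base $c$, use the Steinberg relation to produce each base-$c$ digit as a commutator of two elementary matrices whose coefficients are powers of $c$, and use the doubling element's iterates to represent these powers in length $O(\log|x|)$. Correction terms from successive doublings are cleared recursively by the same machinery, since they involve exponents strictly smaller than $x$, so that the total cost is controlled by a geometric sum that remains $O(\log|x|)$. Finally, one checks that the index $k$ required for the Steinberg commutator can always be chosen distinct from $i$ and $j$ — again using $p \geq 3$ — so the construction applies uniformly to every elementary matrix.

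The main obstacle is the bookkeeping: the doubling conjugation does not act diagonally on the set of elementary matrices, so one must simultaneously control an entire family of exponents under iteration and show that the Steinberg relations suffice to untangle them without reintroducing long subwords. This is the heart of the Lubotzky--Mozes--Raghunathan argument, and an explicit combinatorial realization of these shortcuts is given in Riley's work, either of which can be invoked verbatim.
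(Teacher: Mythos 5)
Your proposal is correct in outline, but it is a genuinely different construction from the paper's: it is essentially the original route of Lubotzky--Mozes--Raghunathan \cite{LMRComptes}, made explicit by Riley \cite{RileyNav}. Conjugating $e_{ik}(y)$ by your block-embedded hyperbolic element $\gamma$ indeed gives $e_{ik}(2y)e_{jk}(y)$, so the subgroup $\langle e_{ik},e_{jk},\gamma\rangle\cong\Z^2\rtimes_F\Z$ exponentially distorts its $\Z^2$, and expanding the exponent vector $(x,0)$ in ``base $F$'' (Horner-style, sharing the powers of $\gamma$ across digits) yields a word of length $O(\log|x|)$. Two caveats on your write-up: the correction exponents produced by iterated conjugation are comparable to $x$, not ``logarithmic in their values,'' and the assembly as you state it---representing each power $c^m$ separately by $\gamma^{\pm m}$ and clearing corrections recursively---costs $\sum_m O(m)=O((\log|x|)^2)$ unless the conjugating powers are telescoped; this telescoping is precisely the bookkeeping you defer to \cite{LMRComptes,RileyNav}, which is legitimate for the bare statement since the lemma is attributed to those papers. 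The paper proves the lemma differently because it needs more than existence: it defines $\short{e}_{ij;S,T}(x)$ as a discrete approximation (via Lemma~\ref{lem:approx}) of an explicit curve $\short{u}(xz_i\otimes z_j)$ in a solvable Lie subgroup $H_{S,T}$ of the thick part, built from independent commuting matrices and their eigenbases. What that buys is the internal structure of the shortcut words---products of conjugates $A_i^{l}u(\cdot)A_i^{-l}$ lying in $H_{S,T}$---which is exactly what Theorem~\ref{thm:HDehn} (quadratic Dehn function of $H_{S,T}$) and Lemmas~\ref{lem:xiConj}, \ref{lem:shortEquiv}, and \ref{lem:infPres} manipulate to fill Steinberg-type relations among shortcuts at quadratic cost. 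With your words the length bound still holds, but the rest of the quartic filling argument would require either reproving those lemmas for your shortcuts or showing that your shortcuts and the paper's are connected by homotopies of area $O((\log|x|)^2)$.
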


To state the second lemma, we will need to define some sets of matrix
indices.  If $P=U(S_1,\dots,S_n)$, let
\begin{align*}
& \chi(M_P):=\{(s_1,s_2)\mid s_1,s_2\in S_i\text{ for some $i$}\},\\
& \chi(N_P):=\{(s_1,s_2)\mid s_1\in S_i,s_2\in S_j\text{ for some $i<j$}\},\\
& \chi(P):=\chi(M_P)\cup \chi(N_P)=\{(s_1,s_2)\mid s_1\in S_i,s_2\in S_j\text{ for some $i\le j$}\}.
\end{align*}

Let $P_\Delta$ be as in Corollary~\ref{cor:triBounds}.
\begin{lemma}\label{lem:edgeChar}
  If $p\ge 3$, there is a $c$ depending only on $p$ and a choice of a
  word $w_e\in \Sigma^*$ for each edge $e$ in $\tau$ such that if $\Delta$ is a 2-cell of $\tau$ and $e=(x,y)$ is an edge of $\Delta$,
  then:
  \begin{itemize}
  \item $w_e$ represents $\bar{f}_0(x)^{-1}\bar{f}_0(y)$,
  \item $\ell(w_e)=O(\ell)$,
  \item $w_e$ can be written as a product $w_e=z_1\dots z_n$ such that
    $n\le c$ and each $z_i$ is either an element of $\Sigma\cap M_{P_\Delta}$
    or a word $\short{e}_{ij}(x)$ where $(i,j)\in \chi(N_{P_\Delta})$ and
    $|x|\le c_\rho' e^{c_\rho'\ell}$, where $c_\rho'$ is the constant from Cor.~\ref{cor:triBounds}.
  \end{itemize}
\end{lemma}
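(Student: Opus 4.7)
The plan is to pick, for each edge $e$, a single parabolic $P_e$ that refines the parabolics $P_\Delta$ attached by Corollary~\ref{cor:triBounds} to every triangle containing $e$, and then use the $P_e$-Levi decomposition of $g_e:=\bar{f}_0(x)^{-1}\bar{f}_0(y)$ to build $w_e$ as a concatenation of shortcut elementary matrices (for entries genuinely off-block for every $P_\Delta$) and short strings of ordinary generators (for entries that fall inside some $M_{P_\Delta}$, which are forced to be small).

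For each edge $e$, let $\cT(e)$ denote the set of 2-cells of $\tau$ containing $e$, and set $P_e := \bigcap_{\Delta \in \cT(e)} P_\Delta$. Since standard parabolics are closed under intersection, $P_e \in \cP$ and the block structure of $P_e$ is the common refinement of those of the $P_\Delta$; by Corollary~\ref{cor:triBounds}, $g_e \in P_\Delta$ for each $\Delta$, hence $g_e \in P_e$. Write its Levi decomposition $g_e = n_e m_e$ with $n_e \in N_{P_e}(\Z)$ and $m_e \in M_{P_e}(\Z)$. I claim:
\begin{enumerate}
\item[(i)] $d_\Gamma(I, m_e) = O(1)$;
\item[(ii)] $\|n_e\|_\infty = O(e^{c_\rho' \ell})$;
\item[(iii)] for every $(i,j) \in \chi(N_{P_e})$ that also lies in $\chi(M_{P_\Delta})$ for some $\Delta \in \cT(e)$, the entry $(n_e)_{ij}$ is $O(1)$.
\end{enumerate}
Bounds (i) and (ii) follow from Corollary~\ref{cor:triBounds} applied with any $\Delta \in \cT(e)$: writing $g_e = n^\Delta m^\Delta$ with $m^\Delta$ bounded and $\|n^\Delta\|_\infty \le c_\rho' e^{c_\rho' \ell}$, the matrix $m_e$ is the $P_e$-refined block diagonal of $m^\Delta$, hence bounded, and $n_e = g_e m_e^{-1}$ is at most exponentially large. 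For (iii), fix such a $\Delta$ and let $I$ be the $P_\Delta$-block containing $i$ and $j$. Since the $P_e$-sub-blocks relevant to $m_e|_{I \times I}$ are all contained in $I$, we have the identity $g_e|_{I \times I} = (n_e|_{I \times I})(m_e|_{I \times I})$, which is a Levi decomposition of a bounded integer matrix with respect to the $P_e$-sub-blocks of $I$; both factors are therefore bounded, and so is $(n_e)_{ij}$.

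Given these bounds, decompose $n_e$ as $\prod_{(i,j) \in \chi(N_{P_e})} e_{ij}((n_e)_{ij})$ in an order for which the coefficients coincide with the matrix entries: multiply the factors for higher-indexed rows to the left of those for lower-indexed rows, with factors inside a fixed row in any order since they commute (an induction on rows, using that each row operation sees the untouched lower rows as identity, confirms this). Now let $u_m \in (\Sigma \cap M_{P_e})^*$ be a word of length $O(1)$ representing $m_e$, which exists by (i) because $M_{P_e}(\Z)$ is generated by $\Sigma \cap M_{P_e}$; and for each $(i,j) \in \chi(N_{P_e})$ pick either the shortcut $\short{e}_{ij}((n_e)_{ij})$ of length $O(\ell)$ (Lemma~\ref{lem:shortcuts} together with (ii)) when $(i,j) \in \chi(N_{P_\Delta})$ for \emph{every} $\Delta \in \cT(e)$, or otherwise the unshortened word $e_{ij}^{(n_e)_{ij}}$ of length $O(1)$ (using (iii)). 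Concatenate these to form $w_e$; then $\ell(w_e) = O(\ell)$.

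To verify the decomposition condition for a given $\Delta \in \cT(e)$: parse the letters of $u_m$ as elements of $\Sigma \cap M_{P_e} \subseteq \Sigma \cap M_{P_\Delta}$; parse each shortcut factor $\short{e}_{ij}$ as a single factor with $(i,j) \in \chi(N_{P_\Delta})$ by construction; and parse each $O(1)$-length factor $e_{ij}^{(n_e)_{ij}}$ as a bounded number of copies of $e_{ij}$, each of which is either a generator in $\Sigma \cap M_{P_\Delta}$ (when $(i,j) \in \chi(M_{P_\Delta})$) or the shortcut $\short{e}_{ij}(1)$ with $(i,j) \in \chi(N_{P_\Delta})$. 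The total factor count is $O(|u_m| + |\chi(N_{P_e})|) = O(p^2)$. The main obstacle is (iii)---showing that the within-block entries of $n_e$ are forced to be bounded even though $n_e$ can be exponentially large overall. It is this bounded-within-block phenomenon that lets the same word $w_e$ admit a valid decomposition simultaneously for both triangles sharing an interior edge, even when Corollary~\ref{cor:triBounds} assigns them different parabolics $P_\Delta$.
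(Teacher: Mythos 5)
Your proposal is correct and takes essentially the same route as the paper: both form the intersection parabolic $P_e = \bigcap_{\Delta \in \cT(e)} P_\Delta$ (which is $P_\Delta \cap P_{\Delta'}$ for interior edges), take the $P_e$-Levi decomposition $g_e = n_e m_e$, check that $m_e$ and the within-$P_\Delta$-block entries of $n_e$ are bounded while the genuinely off-block entries are at most exponential, and then build $w_e$ as the reductive word times an ordered product of elementary factors, using shortcuts only where the entries may be large.
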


\begin{proof}[Proof of Lemma~\ref{lem:shortcuts}]
  In \cite{LMRComptes}, the $\short{e}_{ij}(x)$ are constructed by
  including the solvable group $\R\ltimes \R^2$ in the thick part of
  $G$; since $\R^2\subset \R\ltimes \R^2$ is exponentially distorted,
  there are curves in $\R\ltimes \R^2$ which can be approximated by
  words in $\Gamma$.  For our purposes, we will need a construction
  which uses more general solvable groups.  In particular, when $p\ge
  4$, we can construct the $\short{e}_{ij}(x)$ as approximations of
  curves in solvable groups with quadratic Dehn function.

  Let $S,T\subset \{1,\dots,n\}$ be disjoint subsets and let $s=\#S$
  and $t=\#T$.  Assume that $s\ge 2$.  We will define a solvable
  subgroup $H_{S,T}\subset U(S,T)$.  Let $A_1,\dots, A_{s}$ be a set of
  simultaneously diagonalizable positive-definite matrices in
  $\SL(S,\Z)$.  The $A_i$'s have the same eigenvectors; call these
  shared eigenvectors $v_1,\dots, v_{s}\in \R^{S}$, and normalize them
  to have unit length.  The $A_i$ are entirely determined by their
  eigenvalues, and we can define vectors
  $$q_i=(\log \|A_iv_1\|_2,\dots,\log \|A_iv_s\|_2)\in \R^s$$
  Since $A_i\in \SL(S,\Z)$, the product of its eigenvectors is $1$,
  and the sum of the coordinates of $q_i$ is 0.  We require that the
  $A_i$ are independent in the sense that the $q_i$ span a
  $(s-1)$-dimensional subspace of $\R^s$; since they are all contained
  in an $(s-1)$-dimensional subspace, this is the maximum rank
  possible. If a set of matrices satisfies these conditions, we call
  them a set of {\em independent commuting matrices} for $S$.  A construction of such matrices can be found in Section 10.4 of
  \cite{ECHLPT}.  The $A_i$ generate a subgroup isomorphic to
  $\Z^{s-1}$, and by possibly choosing a different generating set for
  this subgroup, we can assume that $\lambda_i:=\|A_iv_i\|_2>1$ for
  all $i$.

  Let $B_1^{tr},\dots, B_{t}^{tr}\in \SL(T,\Z)$ (where $^{tr}$
  represents the transpose of a matrix) be a set of independent
  commuting matrices for $T$ and let $w_1,\dots, w_{t}\in \R^{T}$ be
  the basis of unit eigenvectors of the $B_i^{tr}$.  Choose the $B_i$
  so that $\mu_i:= \|w_i B_i\|_2 >1$.  Let
  \begin{align*}
    H_{S,T}  :=&\left\{\begin{pmatrix}\prod_i A_i^{x_i} & V \\
      0                      & \prod_i B_i^{y_i}\end{pmatrix} \middle|\; x_i,y_i\in \R, V\in \R^S\otimes \R^T\right\} \\
    =&(\R^{s-1}\times\R^{t-1})\ltimes (\R^S\otimes \R^T).
  \end{align*}
  Note that $H_{S,T}\cap \Gamma$ is a cocompact lattice in $H_{S,T}$,
  so $H_{S,T}$ is contained in the thick part of $G$.  That is, if
  $\epsilon$ is sufficiently small, then $[H_{S,T}]_\cE\subset
  \cE(\epsilon)$, so Lemma~\ref{lem:approx} can be used to construct
  words in $\Sigma^*$ out of paths in $H_{S,T}$.  We will use this and
  the fact that the subgroup $\R^S\otimes \R^T$ is exponentially
  distorted in $H_{S,T}$ to get short words in $\Sigma^*$ representing
  certain unipotent matrices.

  By abuse of notation, let $A_i$ and $B_i$ refer to the corresponding
  matrices in $H_{S,T}$.  The group $H_{S,T}$ is generated by powers
  of the $A_i$, powers of the $B_i$, and elementary matrices in the
  sense that any element of $H_{S,T}$ can be written as
  $$\prod A_i^{x_i}\prod B_i^{y_i} \begin{pmatrix} I_S & V \\ 0 & I_T \end{pmatrix},$$
  for some $x_i,y_i\in \R$ and $V\in \R^S\otimes \R^T$, where $I_S$
  and $I_T$ represent the identity matrix in $\SL(S,\Z)$ and
  $\SL(T,\Z)$ respectively.  As with discrete groups we will associate
  generators with curves, and words with concatenations of curves.  We
  let $A_i^{x}$ correspond to the curve
  $$d\mapsto \begin{pmatrix} A_i^{xd} & 0 \\ 0 & I_T \end{pmatrix},$$
  $B_i^{x}$ to the curve
  $$d\mapsto \begin{pmatrix} I_S & 0 \\ 0 & B_i^{xd} \end{pmatrix},$$
  and 
  $$u(V)=\begin{pmatrix} I_S & V \\ 0 & I_T \end{pmatrix}$$
  to the curve
  $$d\mapsto \begin{pmatrix} I_S & dV \\ 0 & I_T \end{pmatrix},$$
  where
  in all cases, $d$ ranges from $0$ to $1$.  Let $c\ge
  \max\{\ell(A_i),\ell(B_i)\}$.  Then the word $A_i^x u(v_i\otimes w)
  A_i^{-x}$ represents the matrix $u(\lambda_i^xv_i\otimes w)$ and
  corresponds to a curve of length at most $2cx+\|v_i\|_2\| w\|_2$
  connecting $I$ and $u(\lambda_i^xv_i\otimes w)$.  Similarly,
  if $t\ge 2$, then $B_i^{-x} u(v\otimes w_i) B_i^{x}$ has length
  at most $2cx+\|v_i\|_2\|w\|_2$ and connects $I$ and
  $u(\mu_i^xv\otimes w_i).$

  If $V\in \R^S\otimes \R^T$, then 
  $$V=\sum_{i,j} x_{ij}v_i\otimes w_j$$
  for some $x_{ij}\in \R$.  Let 
  $$l_{i}(x)= \begin{cases} \lceil \log_{\lambda_i} |x|\rceil &\text{ if $|x|>1$,} \\
    0&\text{ if $|x|\le 1$,}
  \end{cases}$$
  and define
  $$\gamma_{ij}(x)= A_i^{l_{i}(x)} u\biggl(\frac{x}{\lambda_i^{l_{i}(x)}} v_i\otimes w_j\biggr) A_i^{-l_{i}(x)}.$$
  Note that $|x/\lambda_i^{l_{i}(x)}|\le 1$.  Let
  $$\short{u}(V):=\prod_{i,j} \gamma_{ij}(x_{ij}).$$
  Then $\short{u}(V)$ represents $u(V)$ and there
  is a $c'$ such that 
  $$\ell(\short{u}(V))\le c'(1+\log \|V\|_2)$$
  for all $V$.  

  If $i\in S$ and $j\in T$, then $e_{ij}(x)=u(x z_i\otimes z_j)\in
  H_{S,T}$.  If $x\in \Z$, then we can apply Lemma~\ref{lem:approx} to
  approximate $\short{u}(x z_i\otimes z_j)$ by a word
  $\short{e}_{ij;S,T}(x)\in \Sigma^*$ which represents $e_{ij}(x)$ and
  whose length is $O(\log |x|)$.  In general, changing $S$ and $T$
  will change $\short{e}_{ij;S,T}(x)$ drastically, but later, we will
  prove that if $i\in S,S'$ and $j\in T,T'$, and $S$ and $S'$ satisfy
  some mild conditions, then $\short{e}_{ij;S,T}(x)$ and
  $\short{e}_{ij;S',T'}(x)$ are connected by a homotopy of area
  $O((\log |x|)^2)$.  Because of this, the choice of $S$ and $T$ is
  largely irrelevant.  Thus, for each $(i,j)$, we choose a $d\not\in
  \{i,j\}$ and let
  $$\short{e}_{ij}(x)=\short{e}_{ij;\{i,d\},\{j\}}(x).$$
\end{proof}

\begin{proof}[Proof of Lemma~\ref{lem:edgeChar}]
  If $e=(x,y)$ is an interior edge of $\tau$, it is in the boundary of
  two 2-cells; call these $\Delta$ and $\Delta'$.  By
  Corollary~\ref{cor:triBounds}, there is a $c$ depending only on $p$
  such that if $g=\bar{f}_0(x)^{-1}\bar{f}_0(y)\in \Gamma$ and
  $g_{ij}$ is the $(i,j)$-coefficient of $g$, then
  \begin{align*}
    &g\in P_{\Delta}(\Z)\cup P_{\Delta'}(\Z) \\
    &  |g_{ij}|<c  \text{\quad if $(i,j)\in \chi(M_{P_{\Delta}})\cup \chi(M_{P_{\Delta'}})$} \\
    &  \|g\|_{\infty}<c e^{c\ell}
  \end{align*}
  The last inequality follows from the
  fact that $d_{\cE}([I]_\cE,f(x))\le \ell$.  Note that $P_{\Delta}\cap P_{\Delta'}$ is parabolic.

  We express $g$ as a word in $\Sigma^*$ as follows.  Let $g=nm$,
  where $n\in N_{P_{\Delta}\cap P_{\Delta'}}(\Z)$ and $m\in
  M_{P_{\Delta}\cap P_{\Delta'}}(\Z)$.  Then $\|m\|_{\infty}< c,$ and there is a $c'$
  depending on $p$ such that $\|m\|_2 < c'$ and $\|m^{-1}\|_2< c'$.  Therefore,
  $$\|n\|_{\infty}\le \|gm^{-1}\|_{2}\le p^2c' c e^{c\ell}$$
  and if $(i,j)\in \chi(M_{P_{\Delta}})\cup \chi(M_{P_{\Delta'}})$, then $|n_{ij}|<pc'$.

  Since $n$ is a unipotent matrix, we can write $n$ as a product
  $$n=\prod_{(i,j)\in \chi(N_{P_{\Delta}\cap P_{\Delta'}})} e_{ij}(n_{ij})$$ 
  for an appropriate ordering of $\chi(N_{P_{\Delta}\cap
    P_{\Delta'}})$.  We can replace the terms corresponding to large
  coefficients with shortcuts.  Let
  $$w_1=\prod_{(i,j)\in \chi(N_{P_{\Delta}\cap P_{\Delta'}})} \begin{cases}
    e_{ij}^{n_{ij}} & \text{if $(i,j)\in \chi(M_{P_{\Delta}})\cup \chi(M_{P_{\Delta'}})$,}\\
    \short{e}_{ij}(n_{ij}) & \text{otherwise.}
  \end{cases}$$ This represents $n$ and has length $O(\ell)$.

  Finally, there is a $c''$ depending only on $p$ such that we
  can write $m$ as a product $w_2\in (\Sigma\cap M_{P_{\Delta}\cap
    P_{\Delta'}})^*$ of no more than $c''$ generators of
  $M_{P_{\Delta}\cap P_{\Delta'}}$.  Let
  $$\bar{f}_1(e)=w_1w_2\in \Sigma^*.$$
  This satisfies the conditions of the lemma for both $\Delta$ and
  $\Delta'$

  If $e$ is on the boundary of $\tau$ and $e$ is an edge of $\Delta$,
  then $P_\Delta=G$, and since $N_G=\{I\}$, there is a $c$ such that
  $d_{\Gamma}(\bar{f}_0(x),\bar{f}_0(y))<c$.  We can take $w_e$ to be
  a geodesic word representing $\bar{f}_0(x)^{-1}\bar{f}_0(y)$.
\end{proof}

We then construct $\bar{f}_1$ by defining $\bar{f}_1|_e$ to be the
curve corresponding to $w_e$.  Note that $\bar{f}_1|_{\partial\tau}$
differs from the original $w$ by only a bounded distance.  In
particular, there is an annulus in $K_\Gamma$ whose boundary curves
are $w$ and $\bar{f}_1|_{\partial\tau}$ and which has area $O(\ell)$.

\section{Filling the 2-skeleton}\label{sec:filling}
In the previous section, we reduced the problem of filling $\alpha$ to
the problem of filling the curves $\bar{f}_1(\partial\Delta)$, where
$\Delta$ ranges over all $2$-cells of $\tau$.  Each of these curves is
a product of a bounded number of elements of $\Sigma$ and a bounded
number of shortcuts $\short{e}_{ij}(x)$.  In this section, we will
describe methods for filling such curves.  The key to many of these
methods is the group $H_{S,T}$ from Section~\ref{sec:normalform}, which
we used to construct $\short{e}_{ij}$.  This group has two key
properties.  First, when either $S$ or $T$ is large enough, then $H_{S,T}$
has quadratic Dehn function; this is a special case of a theorem of de
Cornulier and Tessera.  Second, when both $S$ and $T$ are sufficiently
large, $H_{S,T}$ contains multiple ways to shorten elementary
matrices.  A good choice of shortening makes it possible to fill many
discs, including discs corresponding to the Steinberg relations.

We first state a special case of a theorem of de Cornulier and Tessera:
\begin{thm}[\cite{dCpersonal}]\label{thm:HDehn}
  If $s\ge 3$ or $t\ge 3$, then $H_{S,T}$ has quadratic Dehn function.
\end{thm}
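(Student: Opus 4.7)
Without loss of generality take $s\ge 3$.  I would work in $H_{S,T}$ with the one-parameter generators $A_i$, $B_j$, and $u_{ij}:=u(v_i\otimes w_j)$, using the natural presentation encoding commutativity of $A:=\langle A_i,B_j\rangle$, commutativity of the abelian normal subgroup $N:=\langle u_{ij}\rangle$, and the semidirect-product rules $A_k u_{ij}A_k^{-1}=u(\alpha_i^{(k)}v_i\otimes w_j)$ (and analogues for $B_l$), where $\alpha_i^{(k)}$ is the eigenvalue of $A_k$ on $v_i$.  Every element has a unique normal form $a\cdot u(V)$ with $a\in A$ and $V\in N$, so the goal reduces to rewriting any identity-word of length $\ell$ into the trivial normal form using $O(\ell^2)$ relators.

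The plan is to slide every $u$-letter to the right past all the $A$- and $B$-letters that precede it, so that the word becomes an $A$-part followed by an $N$-part, at which point it must equal the identity letter-by-letter.  Shuffling within $A$ or within $N$ costs zero area up to free reduction, so the substantive cost lies entirely in the sliding operations.  Each slide of $u_{ij}$ past $A_k^{\pm 1}$ applies a single conjugation relation and multiplies the unipotent coordinate by $\alpha_i^{(k)}$, so naively the intermediate unipotent word grows exponentially.  The key, directly parallel to the shortcut construction in the proof of Lemma~\ref{lem:shortcuts}, is to store the accumulated unipotent content not as a raw product of $u_{ij}$'s but in the representative $\short{u}(V)$ introduced there; since $\|V\|_2\le e^{O(\ell)}$ throughout, we have $\ell(\short{u}(V))=O(\ell)$.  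After each slide I would rewrite the current unipotent word back into shortcut form, using only commutativity of $N$ and bounded-cost conjugations of individual $u_{ij}$'s.

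The hypothesis $s\ge 3$ is exactly what makes $\short{u}$ available: it forces the span of the exponent vectors $q_1,\dots,q_s$ to have dimension at least $2$, so every weight $v_i\otimes w_j$ possesses a strictly contracting direction in $A$.  The symmetric case $t\ge 3$ follows by the transpose automorphism exchanging $A$ and $B$.  The main obstacle is to prove that the shuffle-and-shortcut procedure really achieves total area $O(\ell^2)$ rather than $O(\ell^3)$ or worse.  For this I would introduce a potential function tracking, at each stage of the reduction, the number of $A$-$u$ pairs still out of order together with the $\ell^2$-norm of the current unipotent coordinate; each elementary slide decreases the pair count by $1$ at cost $O(1)$, and the rewriting into shortcut form between consecutive slides costs $O(\ell)$ and is needed at most $O(\ell)$ times.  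Summing over all slides gives $O(\ell^2)$.  Heuristically, the case $s=t=2$ fails because the only contracting direction for a given weight lies in the very direction one needs to slide past, so no amount of reordering can avoid producing intermediate words of exponential length.
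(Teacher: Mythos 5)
First, a point of orientation: the paper does not prove Theorem~\ref{thm:HDehn} at all --- it is imported as a special case of a theorem of de Cornulier and Tessera (cited as a personal communication), so your sketch is being measured against a genuinely substantial external result rather than a short internal argument. As written, your argument has a gap at its central step. You claim that after each slide you can rewrite the accumulated unipotent content back into the compressed form $\short{u}(V)$ at area cost $O(\ell)$, ``using only commutativity of $N$ and bounded-cost conjugations of individual $u_{ij}$'s.'' But the word $\short{u}(V)$ contains powers of the $A_i$ of size $O(\ell)$, so carrying a single letter $A_k^{\pm1}$, $B_l^{\pm1}$, or a fresh $u$-letter across $\short{u}(V)$ and re-expressing the result as $\short{u}(V')$ is itself a filling problem for a loop of length $O(\ell)$ in $H_{S,T}$ --- exactly the kind of loop whose area the theorem is supposed to control. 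No relation you list gives this rewrite linear (or even any a priori) area, and the potential-function bookkeeping does not supply it; indeed, the paper's own manipulations of precisely this type (Lemma~\ref{lem:xiConj}, Lemma~\ref{lem:shortEquiv}) each cost quadratic area and use Theorem~\ref{thm:HDehn} as an input. So the argument is circular at the one point where the real work lies.

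A symptom of the same problem is that your sketch never genuinely uses $s\ge 3$. Your stated reason --- that $s\ge 3$ is what makes $\short{u}$ available --- is incorrect: the compression trick needs only one expanding direction per weight, and the paper constructs its shortcuts with $\#S=2$, $\#T=1$ (see the proof of Lemma~\ref{lem:shortcuts}). What $s\ge3$ or $t\ge3$ actually buys is that for every \emph{pair} of weights $v_i\otimes w_j$, $v_k\otimes w_l$ there is a single direction in $\R^{s-1}\times\R^{t-1}$ contracting both simultaneously, i.e.\ no two weights are opposite. When $s=t=2$ the weights of $v_1\otimes w_1$ and $v_2\otimes w_2$ are opposite, simultaneous contraction is impossible, and the Dehn function is exponential rather than quadratic; yet your shuffle-and-compress procedure, as described, would go through verbatim in that case, since each individual slide and each individual compression only ever invokes one weight at a time. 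An argument that does not break for $s=t=2$ cannot be correct; identifying where simultaneous contraction of pairs of weights must enter --- namely when a compressed block $\short{u}(V)$ has to be moved past letters that expand some of its coordinates while contracting others --- is exactly the missing content, and it is why this statement is a theorem of de Cornulier--Tessera rather than an exercise in normal forms.
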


The quadratic Dehn function will let us switch between different
shortenings.  Say $\#S\ge 3$, $\#T\ge 2$, and let $A_i\in \SL(S,\Z)$,
$B_i\in \SL(T,\Z)$, $v_i\in \R^S$, and $w_i\in \R^T$ be as in
Section~\ref{sec:normalform}.  Then we can express $u(x v_i\otimes w_j)$
either as $A_i^k u(v_i\otimes w_j) A_i^{-k}$ or as $B_j^{-l}
u(v_i\otimes w_j) B_j^{l}$.  In the following lemma, we switch between
these representations to find fillings for words representing
conjugates of $\short{u}(V)$.  Let $\Sigma_S:=\Sigma\cap \SL(S,\Z)$
and $\Sigma_T:=\Sigma\cap \SL(T,\Z)$.  These are generating sets for
$\SL(S,\Z)$ and $\SL(T,\Z)$.
\begin{lem} \label{lem:xiConj} If $\#S\ge 3$ and $\#T\ge 2$ or vice
  versa, there is an $\epsilon>0$ and a $c>0$ such that if $\gamma$ is
  a word in $(\Sigma_S\cup \Sigma_T)^*$ representing $(M,N)\in
  \SL(S,\Z) \times \SL(T,\Z)$, then
  $$\delta_{\cE(\epsilon)}([\gamma \short{u}(V)\gamma^{-1}]_\cE,[\short{u}(MVN^{-1})]_\cE)= c(\ell(\gamma)+\log{(\|V\|_2+2)})^2.$$
\end{lem}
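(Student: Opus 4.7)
The plan is to induct on $\ell(\gamma)$, reducing to a base case in which $\gamma$ is a single generator. For the inductive step, write $\gamma=\gamma'\cdot s$ with $s\in\Sigma_S\cup\Sigma_T$ a single letter representing $(M_s,N_s)$. The base case provides a homotopy from $s\,\short{u}(V)\,s^{-1}$ to $\short{u}(M_sVN_s^{-1})$ in the thick part; since $\cE$ is simply connected and $\cE(\epsilon)$ is $\Gamma$-invariant, this disc may be conjugated by $\gamma'$ without inflating its area, yielding a homotopy from $\gamma\,\short{u}(V)\,\gamma^{-1}$ to $\gamma'\,\short{u}(M_sVN_s^{-1})\,\gamma'^{-1}$ of the same area. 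Since $\log(\|M_sVN_s^{-1}\|_2+2)=\log(\|V\|_2+2)+O(1)$, the inductive hypothesis applied to $\gamma'$ completes the homotopy.

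For the base case, say $s\in\Sigma_S$ (the other case is symmetric), the central observation is that the word $s\,\short{u}(V)\,s^{-1}$ equals, after distributing the outer $s^{\pm 1}$ through each factor $\gamma_{ij}(x_{ij})$ in $\short{u}(V)$, an alternative shortening of $u(M_sV)$ based on the \emph{conjugated} commuting family $\{sA_is^{-1}\}\subset\SL(S,\Z)$ with shifted eigenbasis $\{sv_i\}$. This alternative shortening lies inside $sH_{S,T}s^{-1}$, a conjugate of $H_{S,T}$ which by Theorem~\ref{thm:HDehn} also has quadratic Dehn function and whose integer lattice sits cocompactly, so $[sH_{S,T}s^{-1}]_\cE\subset\cE(\epsilon)$. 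The target word $\short{u}(M_sV)$ is the standard shortening in $H_{S,T}$. To convert between them, we expand $sv_i=\sum_k b_{ki}v_k$ with bounded $|b_{ki}|$ (since $s$ has bounded entries) and rewrite each conjugated factor as a product of standard shortcuts $\gamma_{kj}$, using commutations inside $N_P$ and inside several instances of $H_{S,T}$ or $sH_{S,T}s^{-1}$. Each local rewrite is filled in a group with quadratic Dehn function, and summing over the bounded number of factors gives the area bound $O((\log(\|V\|_2+2))^2)$.

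The main obstacle is arranging the bookkeeping so the cumulative area over $\ell(\gamma)$ inductive steps is $O((\ell(\gamma)+\log(\|V\|_2+2))^2)$ rather than cubic. Because $\|V_k\|_2$ can grow by a bounded factor at each inductive step, a naive summation gives a base-case cost of $O((k+\log\|V\|_2)^2)$ at step $k$ and hence a total of order $\ell(\gamma)^3$. The quadratic bound in the statement must therefore come either from an amortization (combining many inductive steps into a single filling) or from performing the whole filling at once inside a larger thick-part-embedded subgroup carrying quadratic isoperimetric control coming from Theorem~\ref{thm:HDehn}; for example, by using the cocompactness of $H_{S,T}\cap\Gamma$ in $H_{S,T}$ to approximate the whole loop $\gamma\,\short{u}(V)\,\gamma^{-1}\cdot\short{u}(MVN^{-1})^{-1}$ by a loop in $H_{S,T}$ of comparable length and filling it there in one shot.
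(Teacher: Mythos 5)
Your plan — inducting on $\ell(\gamma)$ one letter at a time — does not yield the stated bound, and you have in fact correctly diagnosed this yourself in the last paragraph: since the intermediate targets $M_kVN_k^{-1}$ can have $\log\|\cdot\|_2$ growing linearly in $k$, the $k$-th base case costs $O\bigl((k+\log\|V\|_2)^2\bigr)$, and summing over $k\le\ell(\gamma)$ gives a cubic total, not the claimed quadratic. Acknowledging the problem and suggesting ``amortization or a single global filling'' is the right instinct but is not a proof; moreover, the concrete fix you float — filling the whole loop $\gamma\short{u}(V)\gamma^{-1}\short{u}(MVN^{-1})^{-1}$ inside $H_{S,T}$ — cannot work as stated, because $\gamma$ represents an element of $\SL(S,\Z)\times\SL(T,\Z)$ and hence is not a word in $H_{S,T}$, whose block-diagonal part is only $\R^{s-1}\times\R^{t-1}$.

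What the paper actually does avoids the letter-by-letter recursion entirely. Taking (in the case $\gamma\in\Sigma_T^*$) the \emph{enlarged} thick-part solvable group $F=(\R^{s-1}\times\SL(T,\Z))\ltimes(\R^S\otimes\R^T)$, which does contain $\gamma$, the shortcuts, and $u(V)$, one first conjugates $\short{u}(V)$ by a single big diagonal power $A_i^{z}$ with $z=C\ell(\gamma)+l_i(x)=O(\ell(\omega))$ chosen so that $\lambda_i^{-z}VN'$ has norm at most $1$ for every $N'$ within distance $\ell(\gamma)$ of the identity. Then one commutes the whole of $\gamma$ past $A_i^{z}$ using $O(\ell(\gamma)\cdot z)=O(\ell(\omega)^2)$ bounded-area relations, pushes the now-tiny $u$ past $\gamma$ using $O(\ell(\gamma))$ bounded-area conjugation relations, and finally un-conjugates, invoking Theorem~\ref{thm:HDehn} in $H_{S,T}$ twice to relate the conjugated-and-shrunk words to the given shortcuts. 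The crucial point you are missing is precisely this one-shot, carefully-sized conjugation $A_i^z$: it is what converts the accumulating-norms problem into a flat count of bounded-area relations, and it requires the choice of $z$ to anticipate the full effect of $\gamma$, not just one letter. A secondary issue is that your base case (rewriting between shortcuts built from $\{A_i\}$ versus $\{sA_is^{-1}\}$) would itself need a filling of the kind supplied by Lemma~\ref{lem:shortEquiv}, which in the paper is proved \emph{after} and \emph{using} Lemma~\ref{lem:xiConj}, so that route risks circularity.
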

\begin{proof}
  Let $\omega:=\gamma
  \short{u}(V)\gamma^{-1}]_\cE\short{u}(MVN^{-1})^{-1}$; this is a
  closed curve in $G$.

  We first consider the case that $V=x v_i\otimes w_j$ and $\gamma\in
  \Sigma_T^*$.  In this case, $M=I$; and $\gamma \short{u}(V)\gamma^{-1}$ and
  $\short{u}(VN^{-1})$ are both words in the group
  \begin{align*}
    F  &:=\left\{\begin{pmatrix}\prod_i A_i^{x_i} & V \\
      0                      & D \end{pmatrix} \middle|\; x_i\in \R, D\in \SL(T,\Z), V\in \R^S\otimes \R^T\right\} \\
    &=(\R^{s-1}\times \SL(T,\Z) )\ltimes (\R^S\otimes \R^T).
  \end{align*}
  This group is generated by 
  $$\Sigma_F:=\{A_i^x\mid x\in \R\}\cup \{u(V)\mid V\in \R^S\otimes \R^T\}\cup \Sigma_T.$$
  Let $\epsilon \le 1/2$ be sufficiently small that $H_{S,T}\subset
  G(\epsilon)$.  Since $G(\epsilon)$ is contractible and $F\subset
  G(\epsilon)$, words in $\Sigma_F^*$ correspond to curves in
  $G(\epsilon)$.  We will show that
  $$\delta_{G(\epsilon)}(\gamma \short{u}(V)\gamma^{-1},\short{u}(VN^{-1}))\le O(\ell(\omega)^2).$$

  Words in $\Sigma_F^*$ satisfy certain relations which correspond to
  discs in $G(\epsilon)$.  In particular, note that if $\sigma\in
  \Sigma_T$, $|x|\le 1$, and $\|W\|_2\le 1$, then 
  \begin{equation}\label{eq:commute}[\sigma, A_k^x]\end{equation}
  and
  \begin{equation}\label{eq:conj}\sigma u(W)\sigma^{-1}u(W\sigma^{-1})^{-1}  \end{equation}
  are both closed curves of bounded length.  Since $G(\epsilon)$ is
  contractible, their filling areas are bounded, and we can think of them as ``relations'' in $F$.

  Let $C=\log_{\min_k\{\lambda_k\}} (p+1)$, and let
  $z=C\ell(\gamma)+l_i(x)$.  This choice of $z$ ensures that
  $$\|\lambda_i^{-z} V N\|_2\le 1.$$
  Indeed, it ensures that if $d_{\SL(T,\Z)}(I,N')\le \ell(\gamma)$, then
  $$\|\lambda_i^{-z} V N'\|_2\le 1.$$
  Furthermore, $z=O(\ell(\omega))$.

  We will construct a homotopy which lies in $G(\epsilon)$ and goes
  through the stages
  \begin{align*}
    \omega_1&=\gamma \short{u}(V)\gamma^{-1} \\
    \omega_2&=\gamma A_i^{z} u(\lambda_i^{-z} V) A_i^{-z} \gamma^{-1}  \\
    \omega_3&= A_i^{z} \gamma u(\lambda_i^{-z} V) \gamma^{-1} A_i^{-z} \\
    \omega_4&= A_i^{z} u(\lambda_i^{-z} V N^{-1}) A_i^{-z} \\
    \omega_5&= \short{u}(VN^{-1}).
  \end{align*}
  Each stage is a word in $\Sigma_F^*$ and so corresponds to a curve
  in $G(\epsilon)$.  

  We can construct a homotopy between $\omega_1$ and $\omega_2$ and
  between $\omega_4$ and $\omega_5$ using Thm.~\ref{thm:HDehn}.  We
  need to construct homotopies between $\omega_2$ and $\omega_3$ and
  between $\omega_3$ and $\omega_4$.  

  We can transform $\omega_2$ to $\omega_3$ by applying
  \eqref{eq:commute} at most $O(\ell(\omega)^2)$ times.  This
  corresponds to a homotopy with area $O(\ell(\omega)^2)$.  Similarly,
  we can transform $\omega_3$ to $\omega_4$ by applying
  \eqref{eq:conj} at most $O(\ell(\omega))$ times, corresponding to a
  homotopy of area $O(\ell(\omega))$.  Combining all of these
  homotopies, we find that
  $$\delta_{G(\epsilon)}(\gamma \short{u}(V)\gamma^{-1},\short{u}(VN^{-1}))\le O(\ell(\omega)^2).$$
  as desired.

  We can use this case to generalize to the case $V=\sum_{i,j}x_{ij}
  v_i\otimes w_j$ and $\gamma\in \Sigma_T^*$.  By applying the case to
  each term of $\short{u}(V)$, we obtain a homotopy of area
  $O(\ell(\omega)^2)$ from $\gamma \short{u}(V)\gamma^{-1}$ to
  $$\prod_{i,j}\short{u}(x_{ij} v_i\otimes w_j N^{-1}).$$
  This is a curve in $H_{S,T}$ of length $O(\ell(\omega))$ which connects $I$
  and $u(VN^{-1})$.  By Thm.~\ref{thm:HDehn}, there is a homotopy between
  this curve and $\short{u}(VN^{-1})$ of area $O(\ell(\omega)^2)$.

  When $\gamma\in \Sigma_S^*$, we instead let $F$ be the group
  \begin{align*}
    F&:=\left\{\begin{pmatrix}D & V \\
        0 & \prod_i B_i^{x_i} \end{pmatrix} \middle|\; x_i\in \R, D\in
      \SL(S,\Z), V\in \R^S\otimes \R^T\right\} \\
    &=(\SL(S,\Z)\times  \R^{t-1} )\ltimes (\R^S\otimes \R^T).
  \end{align*}
  Here, $\short{u}(V)$ is not
  a word in $F$, but since $\#T\ge 2$, we can replace the $A_i$ with
  the $B_i$ in the construction of $\short{u}(V)$.  This results in
  shortcuts $\short{u}'(V)$ in the alphabet
  $$\{B_i^x\mid x\in \R\}\cup\{u(V)\mid V\in \R^S\otimes \R^T\}.$$
  These are curves in $H_{S,T}$ which represent $u(V)$ and have length
  $O(\log\|V\|_2)$, so by Thm.~\ref{thm:HDehn}, there is a homotopy of
  area $O((\log\|V\|_2)^2)$ between $\short{u}'(V)$ and $\short{u}(V)$.

  The argument for $\gamma\in \Sigma_S^*$ shows that
  $$\delta_{G(\epsilon)}(\gamma \short{u}'(V)\gamma^{-1},\short{u}'(MV))=O(\ell(\omega)^2).$$
  Replacing $\short{u}'(V)$ with $\short{u}(V)$ and $\short{u}'(MV)$
  with $\short{u}(MV)$ adds area $O(\ell(\omega)^2)$, so 
  $$\delta_{G(\epsilon)}(\gamma \short{u}(V)\gamma^{-1},\short{u}(MV))=O(\ell(\omega)^2).$$

  If $\gamma\in (\Sigma_S\cup \Sigma_T)^*$, and $\gamma_S\in
  \Sigma_S^*$ and $\gamma_T\in \Sigma_T^*$ are the words obtained by
  deleting all the letters in $\Sigma_T$ and $\Sigma_S$ respectively,
  then $\delta_G(\gamma,\gamma_S\gamma_T)=O(\ell(\omega)^2)$.  
  We can construct a homotopy from $\gamma
  \short{u}(V)\gamma^{-1}$ to $\short{u}(MVN^{-1}))$ going through the
  steps
  \begin{align*}
    \gamma \short{u}(V)\gamma^{-1} & \to \gamma_S \gamma_T \short{u}(V)\gamma_T^{-1} \gamma_S^{-1}\\
    &\to \gamma_S \short{u}(VN^{-1}) \gamma_S^{-1}\\
    &\to \short{u}(MVN^{-1}).
  \end{align*}
  This homotopy has area $O(\ell(\omega)^2)$.
\end{proof}

Recall that $\short{e}_{ij;S,T}(x)$ is an approximation of a curve
$\short{u}(x z_i\otimes z_j)$; we write this curve as
$\short{u}_{S,T}(x z_i\otimes z_j)$ to distinguish curves in
different solvable subgroups.

\begin{lem}\label{lem:shortEquiv}
  If $p\ge 5$, $i\in S,S'$ and $j\in T,T'$, where $2\le \#S,\#S'\le  p-2$, then
  $$\delta_{\Gamma}(\short{e}_{ij;S,T}(x), \short{e}_{ij;S',T'}(x))=O((\log |x|)^2).$$
\end{lem}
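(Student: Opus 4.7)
The strategy is to view both $\short{e}_{ij;S,T}(x)$ and $\short{e}_{ij;S',T'}(x)$ as approximations, via Lemma~\ref{lem:approx}, of the curves $\short{u}_{S,T}(xz_i\otimes z_j)$ and $\short{u}_{S',T'}(xz_i\otimes z_j)$ in the thick part $\cE(\epsilon)$, and to interpolate between them through a bounded chain of intermediate shortcuts, each link bridged by an $O((\log|x|)^2)$-area homotopy.

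First, since $p\ge 5$ and $2\le \#S,\#S'\le p-2$, I would fix a ``witness'' pair $(\tilde S,\tilde T)$ with $i\in\tilde S$, $j\in\tilde T$, $\tilde S\cap\tilde T=\emptyset$, $\#\tilde S\ge 3$, and $\#\tilde T\ge 2$; such a pair exists because the $\ge 5$ indices leave enough room to include $\{i,j\}$ and a total of at least three further indices, split as needed. The hypothesis ensures that Thm.~\ref{thm:HDehn} and Lemma~\ref{lem:xiConj} are both applicable to $H_{\tilde S,\tilde T}$. By the triangle inequality for $\delta_\Gamma$, it then suffices to show
\[
\delta_\Gamma\bigl(\short{e}_{ij;S,T}(x),\short{e}_{ij;\tilde S,\tilde T}(x)\bigr)=O((\log|x|)^2)
\]
for every admissible $(S,T)$, and the claim then follows by comparing both $(S,T)$ and $(S',T')$ to the common pair $(\tilde S,\tilde T)$.

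To prove this reduced statement, I would build a bounded-length chain of intermediate pairs $(S,T)=(S_0,T_0),(S_1,T_1),\dots,(S_N,T_N)=(\tilde S,\tilde T)$, where each consecutive pair differs by an elementary move of one of two types. Move (i): add a single index, chosen from $\{1,\dots,p\}\setminus(S_\ell\cup T_\ell)$ (which is nonempty since $\#S_\ell+\#T_\ell\le p-1$ throughout), to either $S_\ell$ or $T_\ell$. Move (ii): change the choice of independent commuting matrices defining the shortcut within the fixed pair $(S_\ell,T_\ell)$. For a move of type (i), I would pick the new independent commuting matrices on $S_{\ell+1}$ compatibly with those on $S_\ell$ (the construction in Section~10.4 of \cite{ECHLPT} leaves enough freedom), so that both $\short{u}_{S_\ell,T_\ell}(xz_i\otimes z_j)$ and $\short{u}_{S_{\ell+1},T_{\ell+1}}(xz_i\otimes z_j)$ sit inside $H_{S_{\ell+1},T_{\ell+1}}$ as curves of length $O(\log|x|)$ joining $I$ to $e_{ij}(x)$; Thm.~\ref{thm:HDehn} then supplies a filling of area $O((\log|x|)^2)$ within this solvable group, and Lemma~\ref{lem:approx} transfers it to a filling in $K_\Gamma$. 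For a move of type (ii), Lemma~\ref{lem:xiConj}, applied with $V=xz_i\otimes z_j$ and a bounded-length word $\gamma\in(\Sigma_{S_\ell}\cup\Sigma_{T_\ell})^*$ that conjugates one set of generators into the other, yields the same quadratic bound.

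The main obstacle will be arranging the chain so that every intermediate pair satisfies the size hypotheses $\max(\#S_\ell,\#T_\ell)\ge 3$ and $\min(\#S_\ell,\#T_\ell)\ge 2$ required by Lemma~\ref{lem:xiConj} and Thm.~\ref{thm:HDehn}. When $\#T=1$ (as in the default choice $T=\{j\}$) or $\#S=2$, the initial pair does not satisfy these conditions, so the first link must enlarge the smaller set before anything else; the new index needed exists because $\#S\le p-2$ leaves at least $p-\#S-1\ge 2$ indices outside $S\cup\{j\}$, and a symmetric count works for enlarging $S$. Once this initial enlargement is bridged, the remainder of the chain stays in the regime where both bridging tools apply, and since the number of links depends only on $p$, the total cost is $O((\log|x|)^2)$ as required.
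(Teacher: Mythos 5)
Your overall scaffold (interpolate through a bounded chain of intermediate pairs, using Theorem~\ref{thm:HDehn} and Lemma~\ref{lem:xiConj} at each link, then compare both endpoints to a canonical witness) is the right kind of idea and parallels the paper's argument.  However, the two ``elementary moves'' you describe both have genuine gaps, and they sit exactly at the place where the paper has to work hardest.

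First, move (i) cannot be executed as stated.  You claim you can pick the independent commuting matrices on $S_{\ell+1}=S_\ell\cup\{d\}$ ``compatibly'' with those on $S_\ell$, so that $\short{u}_{S_\ell,T_\ell}(xz_i\otimes z_j)$ and $\short{u}_{S_{\ell+1},T_{\ell+1}}(xz_i\otimes z_j)$ both lie in $H_{S_{\ell+1},T_{\ell+1}}$.  This forces the old $A_1,\dots,A_{s}$ (extended by the identity on $z_d$) to lie in the abelian group generated by the new $A'_1,\dots,A'_{s+1}$, i.e.\ to commute with all the $A'_k$.  But the extended $A_i$ fix $z_d$, hence have the joint eigenspace decomposition $\langle v_1\rangle\oplus\cdots\oplus\langle v_s\rangle\oplus\langle z_d\rangle$, and any $A'\in\SL(S_{\ell+1},\Z)$ commuting with all of them must preserve $\R^{S_\ell}$ and $\R\,z_d$, hence is block diagonal with each block an integer matrix.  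The two block determinants are integers multiplying to $1$, so each is $\pm 1$, and positivity forces both to be $1$.  Consequently the sum of the $q'$-coordinates over $S_\ell$ and the single $q'$-coordinate at $d$ are each zero, so the $q$-vectors of a compatible family span a subspace of dimension at most $(\#S_\ell-1)+(1-1)=\#S_{\ell+1}-2$, one less than the $\#S_{\ell+1}-1$ required for independence.  So ``compatible'' independent commuting matrices for a strictly larger $S$ do not exist; the freedom in Section~10.4 of \cite{ECHLPT} does not help here.

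Second, move (ii) misreads Lemma~\ref{lem:xiConj}.  That lemma controls $\delta(\gamma\short{u}(V)\gamma^{-1},\short{u}(MVN^{-1}))$ for a \emph{fixed} construction of $\short{u}$; it does not compare shortcuts built from two different families of commuting matrices, and there is in general no word $\gamma$ at all, bounded or not, conjugating one independent commuting family to another (they typically have unrelated eigenspaces).  What the paper actually does to change $S$ is apply Lemma~\ref{lem:xiConj} term by term: writing $\short{u}_{S,T}(V)=\prod \gamma_i$ with $\gamma_i=A_i^{c_i}u(x_iv_i\otimes z_j)A_i^{-c_i}$ and $|x_i|\le 1$, one takes $\gamma$ to be a word of length $O(\log|x|)$ (not bounded length) representing $A_i^{c_i}\in\SL(S',\Z)$; since $\|x_iv_i\otimes z_j\|_2\le 1$ one has $\short{u}_{S',T'}(x_iv_i\otimes z_j)=u(x_iv_i\otimes z_j)$, so Lemma~\ref{lem:xiConj} converts $\gamma_i$ into $\short{u}_{S',T'}(\lambda_i^{c_i}x_iv_i\otimes z_j)$ at quadratic cost, after which everything lies in $H_{S',T'}$ and Theorem~\ref{thm:HDehn} finishes.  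This termwise application, with a long conjugator and a tiny $V$, is the missing mechanism in your argument; once it replaces your moves (i) and (ii) when $S$ changes, the chaining/witness structure you propose does go through, and only differs from the paper's combination step (which passes through $\short{e}_{\{i,d\},\{j\}}$ and $\short{e}_{\{i,d,d'\},\{i,d,d'\}^c}$) in bookkeeping.
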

\begin{proof}
  \noindent  Case 1:
  Let $V=x z_i\otimes z_j$.  We first consider the case that $S=S'$.  Both
  $\short{u}_{S,T}(V)$ and $\short{u}_{S',T'}(V)$ are curves in
  $H_{S,S^c}$ for $S^c$ the complement of $S$.  Since $k\ge 5$, Thm.~\ref{thm:HDehn} states that $H_{S,S^c}$ has quadratic Dehn function,
  so the lemma follows.  In particular,
  $$\delta_{\Gamma}(\short{e}_{ij;S,T}(x), \short{e}_{ij;S,\{j\}}(x))=O((\log |x|)^2).$$

  \noindent  Case 2: Let $S\subset S'$, $\#S'\ge 3$, $T\subset T'$,
  and $\#T'\ge 2$. Let $\{A_i\}$ be as in the definition of $H_{S,T}$,
  with eigenvectors $v_i$ and let $\{A'_i\}\in \SL(S',\Z)$ be the set
  of independent commuting matrices used in defining $H_{S',T'}$.
  Recall that $\short{u}_{S,T}(V)$ is the concatenation of curves
  $\gamma_i$ of the form
  $$A_i^{c_i} u(x_i v_i\otimes z_j) A_i^{-c_i}$$
  where $c_i\in \Z$ and $|x_i|\le 1$.  Since $A_i\in
  \SL(S,\Z)\subset \SL(S',\Z)$, each of these curves satisfies the hypotheses of
  Lemma~\ref{lem:xiConj} for $S'$ and $T'$, and so there is a homotopy
  of area $O((\log|x|)^2)$ between $\gamma_i$ and 
  $$\short{u}_{S',T'}(\lambda_i^{c_i} x_i v_i\otimes z_j).$$
  Each of these curves lie in $H_{S',T'}$, and since
  $\short{u}_{S',T'}(V)$ also lies in $H_{S',T'}$ and $H_{S',T'}$ has
  quadratic Dehn function,
  $$\delta_{\Gamma}(\short{e}_{S,T}(V),\short{e}_{S',T'}(V))=O((\log
  |x|)^2).$$

  Combining these two cases proves the lemma.  First, we construct a
  homotopy between $\short{e}_{S,T}(V)$ and a word of the form
  $\short{e}_{\{i,d\},\{j\}}(V)$.  If $\#S=2$, we can use case 1.
  Otherwise, let $d\in S$ be such
  that $d\ne i$.  We can construct a homotopy going through the stages
  $$\short{e}_{S,T}(V)\to \short{e}_{S,S^c}(V)\to \short{e}_{\{i,d\},\{j\}}(V).$$
  The second step is an application of case 2, possible because
  $\{i,d\}\subset S$, $\#S\ge3$, and $\{j\}\subset S^c$.  

  Similarly, we can construct a homotopy between $\short{e}_{S,T}(V)$
  and a word of the form $\short{e}_{\{i,d'\},\{j\}}(V)$.  If $d=d'$,
  we're done.  Otherwise, we can use case 2 to construct homotopies
  between each word and $\short{e}_{\{i,d,d'\},\{i,d,d'\}^c}(V)$.
\end{proof}

Using these lemmas, we can give fillings for a wide variety of curves; note that (\ref{lem:infPres:add})--(\ref{lem:infPres:commute}) are versions of the Steinberg relations.
\begin{lem}\label{lem:infPres} If $p\ge 5$ and $x,y\in \Z\setminus
  \{0\}$, then
  \begin{enumerate}
  \item \label{lem:infPres:add}
    If $1\le i,j\le p$ and $i\ne j$, then
    $$\delta_{\Gamma}(\short{e}_{ij}(x)\short{e}_{ij}(y),\short{e}_{ij}(x+y))=O((\log |x|+\log |y|)^2).$$
    In particular,
    $$\delta_{\Gamma}(\short{e}_{ij}(x)\short{e}_{ij}(-x))=O((\log |x|)^2).$$
  \item \label{lem:infPres:multiply}
    If $1\le i,j,k\le p$ and $i\ne j\ne k$, then
    $$\delta_{\Gamma}([\short{e}_{ij}(x),\short{e}_{jk}(y)],\short{e}_{ik}(xy))= O((\log |x|+\log |y|)^2).$$
  \item \label{lem:infPres:commute}
    If $1\le i,j,k,l\le p$, $i\ne l$, and $j\ne k$
    $$\delta_{\Gamma}([\short{e}_{ij}(x),\short{e}_{kl}(y)])=O((\log |x|+\log |y|)^2).$$
  \item \label{lem:infPres:swap}
    Let $1\le i,j,k,l\le p$, $i\ne j$, and $k\ne l$, and 
    $$s_{ij}=e_{ji}^{-1}e_{ij}e_{ji}^{-1},$$
    so that $s_{ij}$ represents
    $$\begin{pmatrix} 0 & 1 \\ -1 & 0
    \end{pmatrix}\in\SL(\{i,j\},\Z).$$  Then
    $$\delta_{\Gamma}(s_{ij} \short{e}_{kl}(x)
    s^{-1}_{ij},\short{e}_{\sigma(k)\sigma(l)}(\tau(k,l)x))=O( (\log
    |x|+\log |y|)^2),$$
    where $\sigma$ is the permutation switching
    $i$ and $j$, and $\tau(k,l)=-1$ if $k=i$ or $l=i$ and $1$
    otherwise.
  \item \label{lem:infPres:diag} If $b=\diagmat(b_1,\dots,b_p)$, then 
    $$\delta_{\Gamma}(b \short{e}_{ij}(x) b^{-1},\short{e}_{ij}(b_i b_j x)(\tau(k,l)x))=O( \log |x|^2).$$  \end{enumerate}
\end{lem}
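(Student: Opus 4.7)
The unifying idea would be that each identity in the statement is a \emph{bona fide} identity of matrices inside a single solvable subgroup $H_{S,T}$, so once every shortcut on both sides has been rewritten as a curve in that $H_{S,T}$, Theorem~\ref{thm:HDehn} supplies the filling. The hypothesis $p\ge 5$ always leaves room to pick disjoint $S, T \subset \{1,\dots,p\}$ with $2 \le \#S, \#T \le p-2$ and $\max(\#S,\#T) \ge 3$, so that $H_{S,T}$ has quadratic Dehn function and Lemma~\ref{lem:shortEquiv} transfers the default shortcuts to $(S,T)$-adapted ones at cost $O((\log|x|+\log|y|)^2)$.

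For parts (\ref{lem:infPres:add})--(\ref{lem:infPres:commute}), I would pick $S$ containing the ``row'' indices and $T$ the ``column'' indices of every elementary matrix in the relation: in (\ref{lem:infPres:add}), $S \supset \{i\},\, T \supset \{j\}$; in (\ref{lem:infPres:multiply}), $S \supset \{i\},\, T \supset \{j,k\}$ so that $e_{jk}(y)\in\SL(T,\Z)\subset H_{S,T}$; in (\ref{lem:infPres:commute}), $S \supset \{i,k\},\, T \supset \{j,l\}$, which is possible because the hypotheses $i\ne l$ and $j\ne k$ force $\{i,k\}\cap\{j,l\}=\emptyset$. After the shortcut transfer, both sides of each relation are curves in $H_{S,T}$ of length $O(\log|x|+\log|y|)$ representing the same element: addition is the abelian law in $\R^S\otimes\R^T$; the Steinberg identity $[e_{ij}(x),e_{jk}(y)]=e_{ik}(xy)$ is a direct matrix computation inside $H_{S,T}$; and $[e_{ij}(x),e_{kl}(y)]=I$ is immediate once both elementary matrices lie in the abelian $\R^S\otimes\R^T$. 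Theorem~\ref{thm:HDehn} closes each loop with quadratic area.

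For part (\ref{lem:infPres:swap}) I would split on $\{k,l\}\cap\{i,j\}$. The empty-overlap case is trivial commutation. When the overlap has size one, I place $\{i,j\}\subset S$ and the remaining index in $T$, so that $s_{ij}\in\Sigma_S^*$ and Lemma~\ref{lem:xiConj} applies directly once $\short{e}_{kl}(x)$ has been transferred to an $(S,T)$-adapted shortcut; a short computation of $Mz_k$ identifies the sign $\tau(k,l)$ with the effect of $s_{ij}$ on the relevant basis vector. For part (\ref{lem:infPres:diag}), since any diagonal element of $\SL(p,\Z)$ has entries in $\{\pm 1\}$, I would factor $b$ as a product of at most $\lfloor p/2\rfloor$ matrices of the form $s_{k_r l_r}^2 = \diagmat(\ldots,-1,\ldots,-1,\ldots)$ (pairing up the $-1$ positions of $b$), reducing conjugation by $b$ to a bounded number of applications of part (\ref{lem:infPres:swap}).

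The genuine obstacle is the residual subcase $\{k,l\}=\{i,j\}$ of (\ref{lem:infPres:swap}): here $s_{ij}$ and the elementary matrix both live in $\SL(\{i,j\},\Z)$, so no choice of $S,T$ places the conjugator in $\SL(S,\Z)\cup\SL(T,\Z)$ while the conjugated shortcut sits in the complementary $\R^S\otimes\R^T$. I would circumvent this by choosing $m\notin\{i,j\}$ (available since $p\ge 3$) and using part (\ref{lem:infPres:multiply}) in reverse to write $\short{e}_{ij}(x)\sim[\short{e}_{im}(x),\short{e}_{mj}(1)]$, conjugating each factor by $s_{ij}$ via the resolved subcases (neither $\{i,m\}$ nor $\{m,j\}$ equals $\{i,j\}$), and then reassembling via part (\ref{lem:infPres:multiply}). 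This forces the order of the argument: establish (\ref{lem:infPres:add})--(\ref{lem:infPres:commute}) and the easy subcases of (\ref{lem:infPres:swap}) first, then the degenerate subcase, and finally (\ref{lem:infPres:diag}).
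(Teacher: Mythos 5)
Your overall architecture (transfer the default shortcuts with Lemma~\ref{lem:shortEquiv}, fill inside a solvable group with quadratic Dehn function via Theorem~\ref{thm:HDehn}, handle conjugation by $s_{ij}$ with Lemma~\ref{lem:xiConj}, reduce the case $\{k,l\}=\{i,j\}$ of part (\ref{lem:infPres:swap}) to part (\ref{lem:infPres:multiply}), and deduce part (\ref{lem:infPres:diag}) from part (\ref{lem:infPres:swap})) is the paper's, and parts (\ref{lem:infPres:add}), (\ref{lem:infPres:commute}), (\ref{lem:infPres:swap}) and (\ref{lem:infPres:diag}) of your sketch go through essentially as in the paper. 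But there is a genuine gap in part (\ref{lem:infPres:multiply}). Your plan is to choose $S\ni i$, $T\supset\{j,k\}$ ``so that $e_{jk}(y)\in\SL(T,\Z)\subset H_{S,T}$'' and then close the loop inside $H_{S,T}$ by Theorem~\ref{thm:HDehn}. The containment $\SL(T,\Z)\subset H_{S,T}$ is false: by construction $H_{S,T}=(\R^{s-1}\times\R^{t-1})\ltimes(\R^S\otimes\R^T)$, and its ``diagonal'' part is only the abelian group generated by the one-parameter subgroups through the commuting matrices $A_i$, $B_j$, not $\SL(S,\R)\times\SL(T,\R)$ (the larger group $F$ containing $\SL(T,\Z)$ appears in the proof of Lemma~\ref{lem:xiConj}, but Theorem~\ref{thm:HDehn} is not available for it). More fundamentally, no single $H_{S,T}$ can witness the Steinberg relation $[e_{ij}(x),e_{jk}(y)]=e_{ik}(xy)$: its unipotent radical $\R^S\otimes\R^T$ is abelian, so the relation you need is never ``a direct matrix computation inside $H_{S,T}$,'' and $\short{e}_{jk}(y)$ is a word of length $O(\log|y|)$ that, as a curve in the thick part, has no reason to stay near $H_{S,T}$.

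The interaction between the block-diagonal and unipotent pieces is exactly what Lemma~\ref{lem:xiConj} is for, and the paper's proof of part (\ref{lem:infPres:multiply}) uses it in a way your sketch omits: choose $d\notin\{i,j,k\}$, set $S=\{i,j,d\}$, and use Lemma~\ref{lem:shortEquiv} to replace $\short{e}_{ij}(x)$ by $\short{e}_{ij;\{i,d\},\{j\}}(x)$, which is a word in the generators of $\SL(S,\Z)$; this word then plays the role of the conjugator $\gamma$ in Lemma~\ref{lem:xiConj}, acting on the unipotent curve $\short{u}_{S,\{k\}}(y\,z_j\otimes z_k)$ and carrying it to $\short{u}_{S,\{k\}}((xy\,z_i+y\,z_j)\otimes z_k)$; only after that does Theorem~\ref{thm:HDehn} (in $H_{S,\{k\}}$, with $\#S=3$) finish the filling at cost $O((\log|x|+\log|y|)^2)$. (A variant of your choice $S\ni i$, $T\supset\{j,k\}$ can also be made to work, but again only by routing the conjugation by $\short{e}_{jk}(y)$ through Lemma~\ref{lem:xiConj} with $\gamma\in\Sigma_T^*$, not by a computation internal to $H_{S,T}$.) Since your treatment of the subcase $(k,l)=(i,j)$ of part (\ref{lem:infPres:swap}), and hence of part (\ref{lem:infPres:diag}), quotes part (\ref{lem:infPres:multiply}), this gap propagates until part (\ref{lem:infPres:multiply}) is repaired as above.
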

\begin{proof}

  For part \ref{lem:infPres:add}, note that
  $$\short{e}_{ij}(x)\short{e}_{ij}(y)\short{e}_{ij}(x+y)^{-1}$$
  is
  within bounded distance of a closed curve in $H_{\{j\}^c,\{j\}}$ of
  length $O(\log|x|)$.  Thus part \ref{lem:infPres:add} of the lemma follows from
  Thm.~\ref{thm:HDehn}.

  For part \ref{lem:infPres:multiply}, let $d\not \in \{i,j,k\}$ and
  let $S=\{i,j,d\}$, so that $\short{e}_{ij;\{i,d\},\{j\}}(x)$ is a
  word in $\SL(S,\Z)$.  We construct a homotopy going through
  the stages
  \begin{align*}
    &  [\short{e}_{ij}(x),\short{e}_{jk}(y)]\short{e}_{ik}(xy)^{-1} &\\
    &  [\short{e}_{ij;\{i,d\},\{j\}}(x),\short{u}_{S,\{k\}}(y z_{j}\otimes z_{k})]\short{e}_{ik;S,\{k\}}(xy)^{-1} & \text{by Lem.~\ref{lem:shortEquiv}}\\
    &  \short{u}_{S,\{k\}}((xy z_i+y z_{j})\otimes z_{k})\short{u}_{S,\{k\}}(y z_{j}\otimes z_{k})^{-1}\short{e}_{ik;S,\{k\}}(xy z_{i}\otimes z_k)^{-1}& \text{by Lem.~\ref{lem:xiConj}}\\
    &  \emptyword & \text{by Thm.~\ref{thm:HDehn}}
  \end{align*}
  All these homotopies have area $O((\log |x|+\log |y|)^2)$.
  
  For part \ref{lem:infPres:commute}, we let $S=\{i,j,d\}$, $T=\{k,l\}$, and use the same
  techniques to construct a homotopy going through the stages
  \begin{align*}
    & [\short{e}_{ij}(x),\short{e}_{kl}(y)])\\
    & [\short{e}_{ij;S,T}(x),\short{e}_{kl;S,T}(y)] & \text{by Lem.~\ref{lem:shortEquiv}}\\
    & \emptyword& \text{by Thm.~\ref{thm:HDehn}}
  \end{align*}
  This homotopy has area $O((\log |x|+\log |y|)^2)$.

  Part \ref{lem:infPres:swap} breaks into several cases depending on $k$ and $l$.  When
  $i,j,k,$ and $l$ are distinct, the result follows from part \ref{lem:infPres:commute},
  since $s_{ij}=e_{ji}^{-1}e_{ij}e_{ji}^{-1}$, and we can use part
  \ref{lem:infPres:commute} to commute each letter past $\short{e}_{kl}(x)$.  If $k=i$ and
  $l\ne j$, let $d,d'\not\in \{i,j,l\}$, $d\ne d'$, and let
  $S=\{i,j,d\}$ and $T=\{l,d'\}$.  There is a homotopy from
  $$s_{ij} \short{e}_{il}(x) s^{-1}_{ij}\short{e}_{jl}(-x)^{-1}$$
  to
  $$s_{ij} \short{u}_{S,T}(x z_i\otimes z_l) s^{-1}_{ij}\short{e}_{jl}(-x z_j\otimes z_l)$$
  of area $O( (\log |x|)^2),$ and since $s_{ij}\in \Sigma_S^*$, the
  proposition follows by an application of Lemma \ref{lem:xiConj}.  A
  similar argument applies to the cases $k=j$ and $l\ne i$; $k\ne i$
  and $l= j$; and $k\ne j$ and $l= i$.

  If $(k,l)=(i,j)$, let $d, d'\not \in \{i,j\}$.
  There is a homotopy going through the stages
  \begin{align*}
&    s_{ij} \short{e}_{ij}(x) s^{-1}_{ij} & \\
&    s_{ij} [e_{id},\short{e}_{dj}(x)] s^{-1}_{ij}& \text{ by part \ref{lem:infPres:multiply}}\\
&    [s_{ij}e_{id}s^{-1}_{ij},s_{ij}\short{e}_{dj}(x)s^{-1}_{ij}]& \text{ by free insertion}\\
&    [e_{jd}^{-1},\short{e}_{di}(x)] & \text{ by previous cases}\\
&    \short{e}_{jd}(-x) & \text{ by part \ref{lem:infPres:multiply}}
  \end{align*}
  and this homotopy has area $O( (\log |x|)^2)$.  One can treat the case $(k,l)=(j,i)$ the same way.

  Since any diagonal matrix in $\Gamma$ is the product of at most
  $p$ elements $s_{ij}$, part \ref{lem:infPres:diag} follows from part \ref{lem:infPres:swap}.
\end{proof}

This lemma allows us to fill shortenings of curves in nilpotent subgroups of
$\Gamma$ efficiently.  
\begin{lemma} \label{lem:shortNP}
  Let $P=U(S_1,\dots, S_s)\in \cP$, let $w_i= \short{e}_{a_ib_i}(x_i)$ and let
  $w=w_1\dots w_d$
  for some $(a_i,b_i)\in \chi(N_P)$.  Let $h=\max\{\log |x_i|,1\}$.  If
  $w$ represents the identity, then $\delta_G(w)=O(d^3h^2)$.
\end{lemma}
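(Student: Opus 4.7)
Plan: I would aim to trivialize $w$ via a sequence of Steinberg-style moves inside $N_P$, each provided by Lemma~\ref{lem:infPres} at cost $O(h^2)$. It will then suffice to show that $O(d^3)$ such moves reduce $w$ to the empty word.

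The central idea is a collection process. Fix a total order on $\chi(N_P)$ that refines the depth filtration of $N_P$ (shallower commutator weight first), and sort the letters of $w$ into the normal form $\prod_{(i,j)\in\chi(N_P)} \widehat{e}_{ij}(y_{ij})$ by a bubble sort. Each elementary step is one of the three applicable parts of Lemma~\ref{lem:infPres}: part (\ref{lem:infPres:commute}) when the two adjacent letters commute, part (\ref{lem:infPres:multiply}) when they do not (which introduces a deeper commutator $\widehat{e}_{ik}(xy)$), and part (\ref{lem:infPres:add}) to merge adjacent letters of the same type. As each new commutator is produced, I would immediately push it into its correct bucket via further commutes. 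Once the sort is complete, the word is in normal form; since $w$ represents the identity in the torsion-free nilpotent group $N_P$, every $y_{ij}$ must be zero, so the reduction is finished.

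For the counting: sorting the $d$ original letters requires $O(d^2)$ adjacent-swap attempts, and each swap that produces a commutator triggers at most $O(d)$ additional commutes to move that commutator to its bucket. Since the nilpotency class of $N_P$ is at most $p-1$, a fixed constant, the cascade of nested commutators terminates after boundedly many levels, and the total operation count is $O(d^3)$. Every intermediate coefficient is a polynomial of bounded degree in the $x_i$, so its logarithm remains $O(h)$, and each operation therefore costs $O(h^2)$. Multiplying gives the claimed $O(d^3 h^2)$ bound.

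The hard part of the argument will be the bookkeeping of the cascade: confirming that handling each newly produced commutator in place keeps both the number of operations and the magnitude of intermediate coefficients polynomially controlled, uniformly across the depth of nested commutation. The bounded nilpotency class of $N_P$ is the key structural input that makes both controls possible, and it is what prevents the naive cost $O(d^{c+1}h^2)$ from degrading when one works with a parabolic $P$ having many blocks.
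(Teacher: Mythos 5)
Your overall strategy---collect $w$ into a bucketed normal form using Lemma~\ref{lem:infPres} and count the moves---is in the same spirit as the paper's argument, but the two quantitative claims that carry your bound do not hold as stated. First, the assertion that every intermediate coefficient has logarithm $O(h)$ is false: having bounded degree in the $x_i$ does not bound the number of monomials, which grows with $d$. Once you merge letters into buckets, the bucket coefficients are entries of the partial products $w_1\cdots w_i$; already for $w=\short{e}_{1p}(1)^{d/2}\,\short{e}_{1p}(-1)^{d/2}$ (so $h=1$) the intermediate bucket coefficient reaches $d/2$, so the relevant applications of part~\ref{lem:infPres:add} cost $O((\log d)^2)$, not $O(h^2)$. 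In general the honest bound is $\log(\text{coefficient})=O(h+\log d)$ (the paper only uses the cruder $\log\|n_i\|_2\le chd$), so your claimed $O(d^3)$ moves would give $O(d^3(h+\log d)^2)$, which is not $O(d^3h^2)$ once $d$ is large compared to $e^{h}$. Second, the $O(d^3)$ move count is asserted rather than proved: the bounded nilpotency class of $N_P$ bounds the \emph{depth} of the commutator cascade, not its breadth. Each level can multiply the number of letters in play (and hence the number of subsequent swaps) by the current word length, so a naive bubble sort that handles spawned commutators in place gives a move count whose exponent grows with the class, i.e.\ with $p$. This is exactly the ``hard part'' you flag, and the class bound alone does not resolve it.

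The paper's accounting is the opposite of yours---few moves, each expensive, rather than many cheap moves---and this is what makes the bookkeeping go through. It inserts the normal-form words $\nu_P(n_i)$ for the prefixes $n_i=w_1\cdots w_i$ and fills the $d$ wedges $\nu_P(n_{i-1})\,w_i\,\nu_P(n_i)^{-1}$ one at a time. Each wedge contains at most $p^2+1$ shortcut letters (the buckets plus the single new letter $w_i$), so collecting $w_i$ and re-sorting the buckets takes a number of applications of Lemma~\ref{lem:infPres} bounded in terms of $p$ alone; since $\log\|n_i\|_2\le chd$, each application costs $O(h^2d^2)$, and $d$ wedges give $O(d^3h^2)$. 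To salvage your version you would need (i) to keep the already-collected part merged into at most $p^2$ buckets so that each original letter triggers only boundedly many moves---which essentially reproduces the paper's wedge scheme---and (ii) to replace the $O(h^2)$ per-move cost by a correct bound such as $O(h^2d^2)$ or $O((h+\log d)^2)$, redoing the multiplication accordingly.
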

\begin{proof}
  We first describe a normal form for elements of $N_P$.  Let
  $$\chi_k(N_P)=\{(a,b)\mid a\in S_k, (a,b)\in \chi(N_P)\}.$$
  The set $\{e_{ab}\mid (a,b)\in \chi_k(N_P)\}$ generates an
  abelian subgroup of $\Gamma$.  If $n\in N_P$, let $n_{ab}$ be the
  $(a,b)$-coefficient of $n$ and let
  $$\kappa_q(n)=\prod_{(a,b)\in \chi_q(N_P)} \short{e}_{ab}(n_{ab}).$$
  Let
  $$\nu_P(n)=\kappa_s(n)\kappa_{s-1}(n)\dots \kappa_1(n)$$
  This is a word representing $n$, and it has length $O(\log
  \|n\|_2)$.

  Let $n_i\in \Gamma$ be the element represented by $w_1\dots w_i$.
  There is a $c$ such that $\log \|n_i\|_2\le chd$.  The words
  $\nu_P(n_i)$ connect the identity to points on $w$, so we can
  fill $w$ by filling the wedges $\nu_P(n_{i-1})w_{i}
  \nu_P(n_{i})^{-1}$; we consider this filling as a homotopy between $\nu_P(n_{i-1})w_{i}$ and $\nu_P(n_{i})$.
  Note that if $a_i\in S_k$, then 
  $$\kappa_s(n_{i-1})\dots\kappa_{k+1}(n_{i-1})=\kappa_s(n_{i})\dots\kappa_{k+1}(n_{i}),$$
  so it suffices to transform
  $$\kappa_k(n_{i-1})\dots\kappa_{1}(n_{i-1}) w_i\to \kappa_k(n_{i})\dots\kappa_{1}(n_{i}).$$

  We can use parts \ref{lem:infPres:multiply} and \ref{lem:infPres:commute} of
  Lemma~\ref{lem:infPres} to move $w_i$ to the left.
  That is, we repeatedly replace subwords of the form
  $\short{e}_{ab}(x)w_i$
  with
  $w_i\short{e}_{ab}(x)$
  if $b\ne a_i$ and with $w_i\short{e}_{ab_i}(x x_i)\short{e}_{ab}(x)$
  if $b= a_i$.  We always have $a\in S_j$
  for some $j\le k$, so $a<b_i$.

  Each step has cost $O((\log |x|+\log |x_i|)^2)$.  Since $\log|x|\le
  \log \|n_i\|_2\le chd$, this is $O(h^2d^2)$.  We repeat this process
  until we have moved $w_i$ to the left end of the word, which takes
  at most $p^2$ steps and has total cost $O(h^2d^2)$.  The result is a
  word of the form $\kappa'_k\dots \kappa'_1$ where $\kappa'_q$ is a
  product of words of the form $\short{e}_{ab}(x)$ for $(a,b)\in
  \chi_q(N_P)$.  Furthermore, the $\kappa'_q$ are obtained from the
  $\kappa_q(n_i)$ by inserting at most $p^2$ additional words in all (at
  most one word is added in each step, in addition to the original
  $w_i$).

  Since the elements represented by the
  terms of $\kappa'_q$ all commute, we can use parts \ref{lem:infPres:add} and \ref{lem:infPres:commute} of
  Lemma~\ref{lem:infPres} to rearrange the terms in each $\kappa'_q$ and
  transform $\kappa'_k\dots \kappa'_1$ into $\kappa_k(n_{i})\dots\kappa_{1}(n_{i})$.  This takes at
  most $4p^4$ applications of part \ref{lem:infPres:commute} and at most $2p^2$ applications
  of part \ref{lem:infPres:add}, each of which has cost $O(h^2d^2)$.  Thus
  $$\delta_{\Gamma}(\kappa'_k\dots \kappa'_1, \kappa_k(n_{i})\dots\kappa_{1}(n_{i}))=O(h^2d^2),$$
  and so
  $$\delta_{\Gamma}(\nu_P(n_{i-1})w_{i} \nu_P(n_{i})^{-1})=O(h^2d^2).$$
  To fill $w$, we need to fill $d$ such wedges, so $\delta_{\Gamma}(w)=O(h^2d^3).$
\end{proof}
In particular, if $d$ is fixed, then $\delta_{\Gamma}(w)=O(h^2)$.

Finally, we use these tools to fill the curves that occur as $\bar{f}_1(\partial\Delta)$.
\begin{lemma}
  If $\Delta$ is a 2-cell in $\tau$,
  $$\delta_{K_\Gamma}(\bar{f}_1(\partial\Delta))=O(\ell^2).$$
\end{lemma}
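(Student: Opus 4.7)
The plan is to exploit the highly structured form of $\bar{f}_1(\partial\Delta)$ provided by Corollary~\ref{cor:triBounds} and Lemma~\ref{lem:edgeChar}: the boundary is a concatenation of three edge-words, each a product of at most $c$ letters which are either generators of $M_{P_\Delta}$ or shortcuts $\short{e}_{ij}(x)$ with $(i,j)\in \chi(N_{P_\Delta})$ and $|x|\le c_\rho' e^{c_\rho' \ell}$. In particular $\bar{f}_1(\partial\Delta)$ has only $O(1)$ letters in total; the only quantity that grows with $\ell$ is the size of the integer coefficients inside the shortcuts, each of which is bounded by $e^{O(\ell)}$.

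First I would reorganize the boundary word so that all letters from $\Sigma\cap M_{P_\Delta}$ migrate to the right end, leaving only shortcuts $\short{e}_{ab}(\cdot)$ with $(a,b)\in\chi(N_{P_\Delta})$ on the left. This is done by moving one $M_{P_\Delta}$-generator past one adjacent shortcut at a time. When the generator is a diagonal matrix this is Lemma~\ref{lem:infPres}(\ref{lem:infPres:diag}); when it is a permutation-type generator $s_{ij}$ it is Lemma~\ref{lem:infPres}(\ref{lem:infPres:swap}); when it is a within-block elementary matrix $e_{ij}$ with $i,j$ in the same block of $P_\Delta$, one uses parts~(\ref{lem:infPres:multiply}) and~(\ref{lem:infPres:commute}) to rewrite $e_{ij}\short{e}_{ab}(x)e_{ij}^{-1}$ as a product of at most two shortcuts of the same form. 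Each such swap costs $O((\log|x|)^2)=O(\ell^2)$, produces only a bounded number of new shortcuts with coefficients still of magnitude $e^{O(\ell)}$, and only $O(1)$ swaps are required, so this reorganization contributes $O(\ell^2)$ to the filling.

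After reorganization, $\bar{f}_1(\partial\Delta)$ is homotopic via a disc of area $O(\ell^2)$ to a word of the form $N\cdot M$, where $N=\short{e}_{a_1b_1}(y_1)\cdots\short{e}_{a_db_d}(y_d)$ is a product of $d=O(1)$ shortcuts in $N_{P_\Delta}$ with $|y_k|\le e^{O(\ell)}$, and $M$ is a word of bounded length in $\Sigma\cap M_{P_\Delta}$. Since $\bar{f}_1(\partial\Delta)$ represents the identity in $\Gamma$, so does $NM$; the uniqueness of the decomposition $P_\Delta=N_{P_\Delta}\cdot M_{P_\Delta}$ and the fact that $N_{P_\Delta}\cap M_{P_\Delta}=\{I\}$ then force the element of $\Gamma$ represented by $N$ to be the identity in $N_{P_\Delta}(\Z)$ and the element represented by $M$ to be the identity in $M_{P_\Delta}(\Z)$. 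Lemma~\ref{lem:shortNP} now fills $N$ with area $O(d^3h^2)=O(\ell^2)$ for $h=O(\ell)$, and $M$, being a null-homotopic word of bounded length, is filled at cost $O(1)$.

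The main obstacle is the second step: one must verify that \emph{every} generator in $\Sigma\cap M_{P_\Delta}$ can be commuted past an arbitrary shortcut $\short{e}_{ab}(x)$ at cost $O(\ell^2)$. For the diagonal and $s_{ij}$ generators this is given directly by Lemma~\ref{lem:infPres}, but for within-block elementary matrices one has to assemble a short derivation from the Steinberg-type relations~(\ref{lem:infPres:multiply}) and~(\ref{lem:infPres:commute}), and check that the blow-up in the number of shortcuts under iterated conjugation remains uniformly bounded. Since only $O(1)$ $M_{P_\Delta}$-generators and $O(1)$ initial shortcuts are in play, this is purely finite bookkeeping, and once it is in place the quadratic bound follows as outlined.
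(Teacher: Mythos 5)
Your proposal is correct and follows essentially the same route as the paper: both arguments commute the $\Sigma\cap M_{P_\Delta}$-letters past the shortcuts $\short{e}_{ab}(x)$ using Lemma~\ref{lem:infPres} (with the within-block elementary case handled via parts~(\ref{lem:infPres:multiply}) and~(\ref{lem:infPres:commute}), exactly the bookkeeping you flag), and then dispose of the resulting null-homotopic product of boundedly many shortcuts by Lemma~\ref{lem:shortNP}. The only difference is organizational: you perform one global rearrangement into the form $N\cdot M$ and invoke the uniqueness of the $N_{P_\Delta}M_{P_\Delta}$-decomposition, whereas the paper connects the prefix points of the boundary word to the identity by the normal-form words $\gamma_i\nu_P(n_i)$ and fills the resulting wedges one at a time; both yield the same $O(\ell^2)$ bound.
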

\begin{proof}
  By Lemma~\ref{lem:edgeChar}, there is a $c$ depending only on $p$
  such that we can write the word corresponding to
  $\bar{f}_1(\partial\Delta)$ as $g=g_1\dots g_d$, where $d\le c$ and
  each $g_i$ is either an element of $\Sigma\cap M_{P_\Delta}$ or a
  word $\short{e}_{ab}(x)$ where $(a,b)\in \chi(N_{P_\Delta})$ and
  $|x|\le c e^{c\ell}$.

  Let $x_i\in P$ be the element represented by
  $g_1\dots g_i$; by the hypotheses, there is a $c'$ independent of
  $\alpha$ such that $\|x_i\|_2\le c' e^{c'\ell}$. Let
  $x_i=m_in_i$ for some $m_i\in M_P$ and $n_i\in N_P$.  Then
  $d_{\Gamma}(I,m_i)\le c$, and there is a $c''$ independent of
  $\alpha$ such that $\|n_i\|_2 \le c'' e^{c'' \ell}$.

  Let $\gamma_i$ be a geodesic word representing $m_i$, and let
  $w_i=\gamma_i\nu_P(n_i)$.  The $w_i$ are words of length
  $O(\ell(\alpha))$ connecting points on $g$ to the identity, and
  we can get a filling of $g$ by filling the wedges
  $w_ig_{i+1}w_{i+1}^{-1}$.

  The filling depends on $g_{i+1}$.  If $g_{i+1}\in \Sigma_{M_P}$,
  then
  $$w_ig_{i+1}w_{i+1}^{-1}=\gamma_i \nu_P(n_i) g_{i+1} \nu_P(n_{i+1})^{-1}\gamma_{i+1}^{-1},$$
  and $g_{i+1}^{-1}n_ig_{i+1}=n_{i+1}$.
  Lemma~\ref{lem:infPres} allows us to move $g_{i+1}$ past the
  individual terms of $\nu_P(n_i)$, using $O(\ell(\alpha)^2)$ steps.
  After this, we have a word of the form
  $$\gamma_i g_{i+1} h_1\dots h_k \nu_P(n_{i+1})^{-1}\gamma_{i+1}^{-1},$$
  where $h_i=\short{e}_{a_ib_i}(x_i)$ for some $(a_i,b_i)\in
  \chi(N_P)$,  $|x_i| \le c'' e^{\ell(\alpha)}$, and $k\le p^2$.  By
  Lemma~\ref{lem:shortNP}, $h_1\dots h_k \nu_P(n_{i+1})^{-1}$ can be
  reduced to the trivial word at cost $O(\ell^2)$.  This
  leaves us with the word $\gamma_i g_{i+1} \gamma_{i+1}^{-1}$; this
  has length at most $2c+1$ and can be reduced to the trivial word at
  bounded cost.

  If $g_{i+1}=\short{e}_{ab}(x)$ for $(a,b)\in \chi(N_P)$, then
  $\gamma_i=\gamma_{i+1}$, and $\nu_P(n_i) g_{i+1}
  \nu_P(n_{i+1})^{-1}$ represents the identity.  This satisfies the
  hypotheses of Lemma~\ref{lem:shortNP}, and can be reduced to the
  trivial word at cost $O(\ell(\alpha)^2)$.  This leaves $\gamma_i\gamma_{i+1}^{-1}$; as before, this has length at most $2c$ and can thus be reduced to the trivial word at bounded cost.  

  Thus the cost of filling each wedge is $O(\ell^2)$.  Since
  there are at most $c$ wedges, the cost of filling $w$ is
  $O(\ell^2)$.
\end{proof}
Since there are $O(\ell^2)$ such 2-cells to fill, we can fill
$\bar{f}_1(\partial\tau)$ with area $O(\ell^4)$.  Furthermore,
$\bar{f}_1(\partial\tau)$ is a bounded distance from $w$ in
$K_\Gamma$, so
$$\delta_\Gamma(w)\le \delta_{K_\Gamma}(w,\bar{f}_1(\partial\tau))+\delta_{K_\Gamma}(\bar{f}_1(\partial\tau))=O(\ell^4).$$
This proves Theorem~\ref{thm:mainthm}.

\def\cprime{$'$}
\providecommand{\bysame}{\leavevmode\hbox to3em{\hrulefill}\thinspace}
\providecommand{\MR}{\relax\ifhmode\unskip\space\fi MR }
\providecommand{\MRhref}[2]{%
  \href{http://www.ams.org/mathscinet-getitem?mr=#1}{#2}
}
\providecommand{\href}[2]{#2}

\end{document}